\theoremstyle{plain}
\newtheorem{theorem}{Theorem}[section]
\newtheorem{proposition}[theorem]{Proposition}
\newtheorem{lemma}[theorem]{Lemma}
\newtheorem{corollary}[theorem]{Corollary}
\theoremstyle{definition}
\newtheorem{definition}[theorem]{Definition}
\newtheorem{example}[theorem]{Example}
\newtheorem{remark}[theorem]{Remark}
\newcommand\colim{\mathop{\mkern2mu{\operator@font colim}}}
\newcommand*\cocolon{%
        \nobreak
        \mskip6mu plus1mu
        \mathpunct{}%
        \nonscript
        \mkern-\thinmuskip
        {:}%
        \mskip2mu
        \relax
}
\newcommand{\boxa}{\scalebox{0.7}{\raisebox{1pt}{$\Box$}}}
\newcommand{\N}{\mathbb{N}} 
\newcommand{\Nt}{\mathbb{N}^{\infty}} 
\newcommand{\Z}{\mathbb{Z}} 
\newcommand{\2}{\bm{2}} 
\renewcommand{\S}{\mathbf{S}} 
\newcommand{\p}[1]{\widehat{#1}} 
\newcommand{\M}{\mathbf{M}} 
\newcommand{\C}{\mathbf{C}} 
\renewcommand{\d}[1]{{#1}^{\downarrow}} 
\newcommand{\K}[1]{\mathbf{T}^{#1}} 
\renewcommand{\t}{\tau} 
\newcommand{\w}[1]{\widehat{#1}} 
\newcommand{\down}{{\downarrow}}
\newcommand{\up}{{\uparrow}}
\renewcommand{\leq}{\leqslant}
\renewcommand{\phi}{\varphi}
\newcommand{\ev}{\mathrm{ev}} 
\newcommand{\epifin}{\twoheadrightarrow_f} 
\newcommand{\nat}{\Rightarrow} 
\newcommand{\G}{G} 
\newcommand{\V}{\mathcal{V}} 
\renewcommand{\P}{\mathcal{P}} 
\newcommand{\Pfin}{\mathcal{P}_f} 
\newcommand{\Set}{\mathbf{Set}} 
\newcommand{\Setfin}{\mathbf{Set}_f} 
\newcommand{\Bool}{\mathbf{BA}} 
\newcommand{\BStone}{\mathbf{BStone}} 
\newcommand{\Alg}[2]{{#1}^{#2}} 
\newcommand{\Algfin}[2]{{#1}_f^{#2}} 
\begin{document}

\makeatletter
\patchcmd{\ps@pprintTitle}
  {Preprint submitted to}
  {To appear in the}
  {}{}
\makeatother

\begin{frontmatter}

\title{Codensity, profiniteness and algebras of semiring-valued measures}

\author{Luca Reggio}

\address{IRIF, Universit\'e Paris Diderot, Sorbonne Paris Cit\'e, Case 7014, 75205 Paris Cedex 13, France and \\
  Laboratoire J.\ A.\ Dieudonn\'e, Universit\'e C{\^o}te d'Azur, Parc Valrose, 06108 Nice Cedex 02, France \\
  E-mail address: \href{mailto:reggio@unice.fr}{\textnormal{\texttt{reggio@unice.fr}}}\vspace{-2pt}}
  
  \journal{Journal of Pure and Applied Algebra}

\begin{abstract}
We show that, if $S$ is a finite semiring, then the free profinite $S$-semimodule on a Boolean Stone space $X$ is isomorphic to the algebra of all $S$-valued measures on $X$, which are finitely additive maps from the Boolean algebra of clopens of $X$ to $S$. These algebras naturally appear in the logic approach to formal languages as well as in idempotent analysis.
Whenever $S$ is a (pro)finite idempotent semiring, the $S$-valued measures are all given uniquely by continuous density functions. This generalises the classical representation of the Vietoris hyperspace of a Boolean Stone space in terms of continuous functions into the Sierpi{\' n}ski space. 

We adopt a categorical approach to profinite algebra which is based on profinite monads. The latter were first introduced by Ad{\'{a}}mek \emph{et al}.\ as a special case of the notion of codensity monads.
\end{abstract}

\begin{keyword}
profinite algebra \sep Stone duality \sep codensity monad \sep semimodule over a semiring \sep measure \sep Vietoris hyperspace  
\MSC[2010] 28A60 \sep 54H10 \sep 18A40
\end{keyword}

\end{frontmatter}

\section{Introduction}\label{s:intro}
Semirings generalise rings by relaxing the conditions on the additive structure requiring just a monoid rather than a group. The analogue of the notion of module over a ring is that of \emph{semimodule} over a semiring, or more concisely of an $S$-semimodule where  $S$ is the semiring. 
A \emph{profinite} $S$-semimodule is one which is isomorphic to the inverse limit (or cofiltered limit) of finite $S$-semimodules. Every profinite $S$-semimodule carries a topology turning it into a \emph{Boolean} (\emph{Stone}) \emph{space}, that is, a compact Hausdorff space admitting a basis of \emph{clopens}, i.e.\ of subsets which are simultaneously closed and open. By Stone duality for Boolean algebras \cite{Stone1936}, a Boolean space is completely determined by its Boolean algebra of clopen subsets equipped with the set-theoretic operations. 
We show in our main result, Theorem \ref{t:main-S-finite}, that the free profinite $S$-semimodule on a Boolean space $X$ is isomorphic to the algebra of all the measures on $X$ taking values in $S$, provided $S$ is finite. Here, the measurable subsets of $X$ are the clopens, and the measures on $X$ are only required to be finitely additive (cf.\ Definition \ref{d:measure}).

The motivation for the present work comes from logic, and more precisely from the connection between logic on words and the theory of formal languages. 
Many interesting classes of formal languages, both in the computational complexity hierarchy and within regular languages, correspond to fragments of logic. In this setting, certain quantifiers can be modelled by means of semirings (see, e.g., \cite{MP1971, STRAUBINGTT}). It is in understanding the effect of applying a layer of quantifiers to Boolean algebras of formulas, that profinite semimodules play an important r{\^ o}le. Further, in order to show that a certain construction is `optimal' from the viewpoint of language recognition (this can be thought of as a sort of `semantic completeness' result for a logic) it is crucial to have a \emph{concrete} description of these profinite objects, cf.\ \cite{GehrkePR16,GPR2017}. Such a description is provided by our main result.

Our measure-theoretic representation provides a bridge between several topics of interest. Firstly, it connects measures and profinite algebras. In this respect, it is related to Leinster's observation that the notions of integration and of codensity monads are tightly related \cite{Leinster2010}. Codensity monads are a special case of the concept of right Kan extension. Leinster's observation is that sometimes they can be seen as providing a correspondence akin to the one between `integration operators' and `measures'. This analogy becomes concrete in our measure-theoretic representation. Indeed, profinite algebras arise from a special class of codensity monads (see Section \ref{s:codensity-profinite-monads}), and we isolate a class of such monads admitting a concrete representation in terms of genuine measures. On the other hand, our main result makes a connection between measures and logic, as outlined above. Similar connections already exist and have proved useful. For example, in model theory, \emph{Keisler measures} \cite{Keisler1987} are probability measures on the Boolean algebras of definable subsets of models, and generalise the notion of (complete) types. Finally, we connect measures and semirings in the form of integration theory with coefficients in a semiring, which is the main focus of \emph{idempotent analysis} \cite{KM1997}. In the particular case of the tropical semiring, see Example \ref{ex:tropical-semiring}, this leads to \emph{tropical geometry}.

\medskip
In dealing with profinite algebras, we adopt a categorical approach. While monads allow for a categorical treatment of algebra, profinite algebra can be studied by means of \emph{profinite monads} \cite{Bojanczyk15, ChenAMU16}, a special case of right Kan extensions. In Section \ref{s:codensity-profinite-monads} we give a complete account of the basic theory of profinite monads meant to be accessible to non-experts in category theory. In particular, we show in Proposition \ref{p:hat-profinite-algebra} that profinite monads yield the expected notion of profinite algebra for varieties of Birkhoff algebras. This covers the case of the profinite $S$-semimodules free on Boolean spaces, for any $S$ (Corollary \ref{cor:free-prof-semim}).

We only obtain our measure-theoretic characterisation of the free profinite $S$-semimodule on a Boolean space for \emph{finite} semirings $S$, but in Section \ref{s:profinite-idempotent} we study the more general situation where $S$ is profinite. 
We show that the algebras of all $S$-valued measures satisfy a universal property with respect to those semimodules in which the scalar multiplication of $S$ is \emph{jointly continuous}, that we call `strongly continuous' semimodules (cf.\ Theorem \ref{t:free-on-idempotent-profinite-rig}). The case of finite semirings is considered in Section \ref{s:finite-case}. If $S$ is finite then every profinite $S$-semimodule is strongly continuous. Thus, we obtain our main result, Theorem \ref{t:main-S-finite}. 

Finally, in Section \ref{s:idempotent-case} we consider the case where $S$ is profinite and idempotent. In this setting, every measure is uniquely determined by its density function (see Theorem \ref{t:continuous-function-representation-1}). Provided $S$ is finite and idempotent, this yields a characterisation of the free profinite $S$-semimodule on a Boolean space $X$ as the algebra of all the continuous $S$-valued functions on $X$, with respect to an appropriate topology on $S$. 

The main result of this paper was announced in \cite{GPR2017}, where many details of the proofs were omitted. Here, we contribute a complete account of the topic, and we consider the main result from a wider perspective by studying algebras of measures with values in profinite semirings.

\section{Codensity and profinite monads}\label{s:codensity-profinite-monads}
The purpose of this section is to introduce the notion of a profinite monad. This is a special case of a more general construction, namely that of a codensity monad (which, in turn, is a special case of right Kan extension). Profinite monads provide a way of associating with a monad $T$ on the category of sets a monad $\p{T}$ on the category of Boolean spaces. We show in Proposition \ref{p:hat-profinite-algebra} that, whenever the monad $T$ is finitary, $\p{T}{X}$ is the free profinite $T$-algebra on the Boolean space $X$.
Although its content is categorical in nature, and the reader is supposed to be familiar with the basics of category theory, the section is written so as to be accessible to non-experts in category theory. In particular, we provide an elementary exposition of the notions involved up to the concept of monad as a categorical approach to algebra.
For a more thorough introduction to the theory of codensity monads we refer the interested reader to \cite{Leinster2010}. 
\subsection{Codensity monads: a brief introduction}\label{s:codensity}
We start by introducing a class of finitary monads on $\Set$ that will play a crucial r{\^o}le in the following, namely the \emph{semiring monads}. First, let us recall the following notions from algebra.
\begin{definition}\label{d:semiring-and-semimodule}
A \emph{semiring} is a tuple $S=(S,+,\cdot,0,1)$ such that $(S,+,0)$ is an Abelian monoid, $(S,\cdot,1)$ is a monoid, and for all $s,t,u\in S$ the laws
\begin{gather*}
s\cdot (t+u)= (s\cdot t) + (s\cdot u), \\
(t+u)\cdot s= (t\cdot s) + (u\cdot s), \\
s\cdot 0= 0= 0\cdot s
\end{gather*}
are satisfied. A \emph{semimodule over $S$} (or \emph{$S$-semimodule}, for short) is an Abelian monoid $M=(M,+_M,0_M)$ equipped with a `scalar multiplication' of $S$, that is, a function $S\times M\to M$, $(s,m)\mapsto sm$ satisfying 
\begin{gather*}
s(m+_M n)=sm +_M sn, \\
(s+t)m=sm +_M tm, \\
(s\cdot t)m=s(tm), \\
1m=m, \\
0m=0_M=s0_M
\end{gather*}
for all $s,t\in S$ and $m,n\in M$.
\end{definition}
\begin{example}\label{ex:semiring-monad}
Semimodules over semirings can be obtained as algebras for certain monads on $\Set$, called \emph{semiring monads}. 
Indeed, every semiring $S$ gives rise to a functor $\S\colon \Set\to\Set$ which associates with a set $X$ the set of finitely supported $S$-valued functions on $X$, i.e.\
\begin{align}\label{eq:finite-supp-functions}
\S{X}=\{f\colon X\to S\mid f(x)=0 \ \text{for all but finitely many} \ x\in X\}.
\end{align}
If $\phi\colon X\to Y$ is any function between sets, we get a function $\S{\phi}\colon\S{X}\to\S{Y}$ by setting 
\begin{align*}
\S{\phi}\colon f\mapsto \bigg(y\mapsto \sum_{\phi(x)=y}{f(x)}\bigg).
\end{align*}
Each $f\in\S{X}$ can be represented as a formal sum $\sum_{i=1}^n{s_i x_i}$, where $\{x_1,\ldots,x_n\}\subseteq X$ is the support of $f$, and $f(x_i)=s_i$ for each $i$. With this notation, we have $\S{\phi}(\sum_{i=1}^n{s_i x_i})=\sum_{i=1}^n{s_i \phi(x_i)}$. It is straightforward to check that $\S\colon\Set\to\Set$ is a functor. In fact, it is part of a monad $(\S,\eta,\mu)$ whose unit is
\[
\eta_X\colon X\to \S{X}, \ \eta_X(x)\colon x'\mapsto \begin{cases} 1 & \mbox{if } x'=x \\ 0 & \mbox{otherwise} \end{cases}
\]
(in other words, $\eta_X(x)$ is the $S$-valued characteristic function of $\{x\}$), and whose multiplication is
\[
\mu_X\colon \S^2{X}\to \S{X}, \ \sum_{i=1}^n{s_i f_i}\mapsto \bigg(x\mapsto\sum_{i=1}^n{s_i f_i(x)}\bigg).
\]
We remark that the only place where the multiplication of $S$ plays a r{\^ o}le is in the definition of the multiplication $\mu$ of the monad. We refer to $\S$ as the \emph{semiring monad} associated with $S$. Note that the finite power-set functor $\Pfin\colon\Set\to\Set$ is the semiring monad associated with the two-element distributive lattice 
\begin{equation*}
\2=(\{0,1\},\vee,\wedge,0,1),
\end{equation*}
regarded as a semiring. The algebras for the monad $\S$ are precisely the $S$-semimodules.
For example, if $S$ is $\2$, $\N=(\N,+,\cdot,0,1)$ or $\Z=(\Z,+,\cdot,0,1)$, then the algebras for $\S$ are semilattices, Abelian monoids and Abelian groups, respectively.
\end{example}

We briefly recall some basic facts from the theory of monads, see e.g.\ \cite{Joy1990}. If $T=(T,\eta,\mu)$ is a monad on a category $\mathbb{C}$, we write $\Alg{\mathbb{C}}{T}$ for the category of \emph{\textup{(}Eilenberg-Moore\textup{)} algebras for $T$}. In the special case where $T$ is a monad with rank and $\mathbb{C}$ is the category $\Set$ of sets and functions, the categories of the form $\Alg{\Set}{T}$ are, up to equivalence, exactly the varieties of algebras (with operations of possibly infinite, but bounded, arity). This correspondence restricts to categories of algebras for \emph{finitary} $\Set$-based monads (i.e., monads preserving filtered colimits) and varieties of Birkhoff algebras. See, e.g., \cite[{}VI.24]{Joy1990}. 

An interesting example of a $\Set$-monad which is not finitary is the ultrafilter monad. Write $\BStone$ for the category of Boolean spaces (i.e., compact Hausdorff spaces with a basis of clopens) and continuous maps. The underlying-set functor \[|-|\colon \BStone \to \Set\] has a left adjoint 
\begin{align}\label{eq:Stone-Cech-functor}
\beta\colon \Set\to \BStone
\end{align}
which sends a set $X$, regarded as a discrete space, to its Stone-{\v C}ech compactification $\beta X$. This adjunction induces a monad on $\Set$, the \emph{ultrafilter monad}, which is not finitary. By a theorem of Manes (see \cite[Section 1.5]{Manes1976} for a detailed exposition), its algebras are precisely the compact Hausdorff spaces. 

Whether $T$ is finitary or not, the category $\Alg{\Set}{T}$ is always equipped with a (regular epi, mono) factorisation system. In the category of compact Hausdorff spaces, this is the factorisation of a continuous map into a continuous surjection followed by a continuous injection. If $T$ is finitary, we recover the usual decomposition of a homomorphism of Birkhoff algebras into a surjective homomorphism followed by an injective one.

Codensity monads allow us to assign to (almost) any functor a monad on its codomain, and are a special case of a more general construction, namely that of \emph{right Kan extension}.
Henceforth, if $F$ and $G$ are any two parallel functors, $F\nat G$ denotes a natural transformation from $F$ to $G$. 
\begin{definition}\label{def:right-kan-extension}
Let $F\colon {\mathbb C}\to{\mathbb D}$ and $G\colon{\mathbb C}\to{\mathbb E}$ be any two functors. The \emph{right Kan extension of $F$ along $G$} is a pair $(K,\kappa)$, where $K\colon {\mathbb E}\to {\mathbb D}$ and $\kappa\colon K\circ G\nat F$, such that the following universal property is satisfied: for every pair $(K',\kappa')$ with $K'\colon {\mathbb E}\to {\mathbb D}$ and $\kappa'\colon K'\circ G\nat F$, there exists a unique natural transformation $\epsilon\colon K'\nat K$ such that the right-hand diagram below commutes.
If $F=G$, the right Kan extension of $G$ along itself is called the \emph{codensity monad of $G$}, and is denoted by $\K{G}$.
\begin{equation*}
\begin{tikzcd}
{\mathbb C} \arrow{dd}[swap]{G} \arrow[""{name=U, below}]{rr}{F} & & {\mathbb D} & & K'\circ G \arrow[Rightarrow]{rr}{\kappa'} \arrow[Rightarrow]{ddr}[swap]{\epsilon G} & & F \\
 & & & & & & &  \\
{\mathbb E} \arrow[Rightarrow, to=U, shorten <= 0.9em, shorten >= 0.2em, "\kappa"] \arrow{uurr}[swap, near start, inner sep=0.1ex, ""{name=V, above}]{K} \arrow[bend right=30, xshift=10pt, yshift=-10pt]{uurr}[swap, near start, inner sep=0.1ex, ""{name=W, above}]{K'}  \arrow[xshift=5pt, yshift=5pt, Rightarrow, from=W, to=V, shorten <= -0.3em, shorten >= -0.1em, "\epsilon", swap, dashed] & & & & & K\circ G \arrow[Rightarrow]{uur}[swap]{\kappa} &
\end{tikzcd}
\end{equation*}
\end{definition}
We remark that the fact that $\K{G}$ is a monad, i.e.\ it can be equipped with a unit and a multiplication, is a consequence of the universal property of the right Kan extension. Indeed, the unit of $\K{G}$ is obtained by taking $K'$ the identity functor and $\kappa'$ the identity natural transformation, while the multiplication is obtained by setting $K'=\K{G}\circ\K{G}$ and $\kappa'=\kappa\circ\K{G}\kappa$.

The right Kan extension of a functor along another one does not exist in general; however, it does exist under mild assumptions on the categories at hand, and can be computed as a limit. We state this precisely in the next lemma, in the special case of codensity monads. To do so, we first need to recall the notion of comma category. For a functor $G\colon {\mathbb C}\to{\mathbb D}$ and an object $d$ of ${\mathbb D}$, the \emph{comma category} $d\downarrow G$ has as objects pairs $(\alpha, c)$, where $c$ is an object of ${\mathbb C}$ and $\alpha\colon d\to Gc$ is a morphism in ${\mathbb D}$. A morphism between two objects $(\alpha_1,c_1),(\alpha_2,c_2)$ of $d\downarrow G$ is a morphism $f\colon c_1\to c_2$ in ${\mathbb C}$ such that $Gf\circ \alpha_1=\alpha_2$.
\begin{lemma}[{\cite[Theorem X.3.1]{MacLane}}]\label{l:limit-formula}
Let $G\colon {\mathbb C}\to{\mathbb D}$ be any functor. If ${\mathbb C}$ is essentially small \textup{(}i.e., equivalent to a small category\textup{)} and ${\mathbb D}$ is complete, then the codensity monad $\K{G}\colon {\mathbb D}\to {\mathbb D}$ exists and for every $d$ in ${\mathbb D}$
\begin{align*}
\K{G}d=\lim_{d\to Gc} {Gc},
\end{align*}
where the limit is taken over the comma category $d\downarrow G$.\qed
\end{lemma}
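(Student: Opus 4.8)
The plan is to verify directly that the pointwise limit formula defines a functor $K\colon\mathbb{D}\to\mathbb{D}$ together with a natural transformation $\kappa\colon K\circ G\nat G$ satisfying the universal property of the right Kan extension of $G$ along itself (Definition \ref{def:right-kan-extension} with $F=G$); by the remark following that definition, the monad structure on $\K{G}=K$ is then automatic. For each object $d$ of $\mathbb{D}$ I would first observe that the assignment $(\alpha,c)\mapsto Gc$ defines a diagram $D_d\colon d\downarrow G\to\mathbb{D}$. Since $\mathbb{C}$ is essentially small the comma category $d\downarrow G$ is essentially small, and since $\mathbb{D}$ is complete the limit $Kd:=\lim D_d$ exists; write $\pi^d_{(\alpha,c)}\colon Kd\to Gc$ for its projections. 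This gives the object part of $K$ and simultaneously establishes existence.

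Next I would make $K$ functorial. A morphism $g\colon d\to d'$ in $\mathbb{D}$ induces a reindexing functor $d'\downarrow G\to d\downarrow G$, $(\alpha,c)\mapsto(\alpha\circ g,c)$, along which $D_{d}$ pulls back to $D_{d'}$; the family $\{\pi^d_{(\alpha\circ g,\,c)}\}$ is therefore a cone over $D_{d'}$, and the universal property of $Kd'$ yields a unique $Kg\colon Kd\to Kd'$ with $\pi^{d'}_{(\alpha,c)}\circ Kg=\pi^d_{(\alpha\circ g,\,c)}$. Functoriality of $K$ follows from uniqueness in the limit. I would then define the counit by $\kappa_c:=\pi^{Gc}_{(\mathrm{id}_{Gc},\,c)}$; its naturality amounts to the single cone identity obtained by viewing a morphism $f\colon c\to c'$ of $\mathbb{C}$ as a morphism $(\mathrm{id}_{Gc},c)\to(Gf,c')$ in $Gc\downarrow G$.

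For the universal property, let $(K',\kappa')$ be any pair with $\kappa'\colon K'\circ G\nat G$. For each $d$ the maps $\kappa'_c\circ K'\alpha\colon K'd\to Gc$, indexed by $(\alpha,c)\in d\downarrow G$, form a cone over $D_d$: the cone condition for a morphism $h\colon(\alpha_1,c_1)\to(\alpha_2,c_2)$ reduces, via naturality of $\kappa'$ together with the relation $Gh\circ\alpha_1=\alpha_2$, to an identity. Hence there is a unique $\epsilon_d\colon K'd\to Kd$ with $\pi^d_{(\alpha,c)}\circ\epsilon_d=\kappa'_c\circ K'\alpha$. Checking that $\epsilon$ is natural in $d$, and that $\kappa\circ(\epsilon G)=\kappa'$ (the latter being immediate from $\kappa_c=\pi^{Gc}_{(\mathrm{id}_{Gc},c)}$ and $K'\mathrm{id}=\mathrm{id}$), is then a routine comparison of projections.

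The step I expect to be most delicate is \emph{uniqueness} of $\epsilon$. The key observation is the identity $\pi^d_{(\alpha,c)}=\kappa_c\circ K\alpha$, obtained by unwinding the definitions of $\kappa$ and of $K\alpha$ for the morphism $\alpha\colon d\to Gc$; it expresses every limit projection intrinsically in terms of the Kan-extension data $(K,\kappa)$. Given any $\epsilon'\colon K'\nat K$ with $\kappa\circ(\epsilon' G)=\kappa'$, naturality of $\epsilon'$ at $\alpha$ then yields
\[
\pi^d_{(\alpha,c)}\circ\epsilon'_d
=\kappa_c\circ K\alpha\circ\epsilon'_d
=\kappa_c\circ\epsilon'_{Gc}\circ K'\alpha
=\kappa'_c\circ K'\alpha
=\pi^d_{(\alpha,c)}\circ\epsilon_d,
\]
and since this holds for every $(\alpha,c)$ the universal property of the limit forces $\epsilon'_d=\epsilon_d$. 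Apart from this point, the whole argument is a sequence of diagram chases whose only inputs are the defining equations of the projections, the functoriality of the reindexing $(\alpha,c)\mapsto(\alpha\circ g,c)$, and the naturality of $\kappa'$.
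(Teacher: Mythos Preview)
The paper does not supply its own proof of this lemma: it simply cites \cite[Theorem X.3.1]{MacLane} and closes with a \texttt{\textbackslash qed}. Your proposal is a correct, self-contained verification of the pointwise right Kan extension formula, and it is essentially the standard argument one finds in Mac~Lane. The only point worth double-checking is that $d\downarrow G$ is genuinely essentially small (which you assert): since $\mathbb{C}$ is essentially small and the hom-classes $\mathbb{D}(d,Gc)$ are sets, the comma category has a small skeleton, so the limit indeed exists in the complete category $\mathbb{D}$. Otherwise your argument, including the key identity $\pi^d_{(\alpha,c)}=\kappa_c\circ K\alpha$ used for uniqueness, is sound.
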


An example of codensity monad is provided by a result of Kennison and Gildenhuys \cite{KENNISON1971317}, which identifies the codensity monad of the inclusion $\Setfin\to\Set$ of finite sets into sets as the ultrafilter monad on $\Set$. Recently, Leinster \cite{Leinster2010} has reinterpreted Kennison and Gildenhuys' result as a correspondence between measures (i.e.\ ultrafilters, or two-valued measures) and integration operators (i.e.\ elements of the free algebras for the codensity monad). He then takes the analogy further to study the codensity monad of the inclusion of finite-dimensional vector spaces into the category of vector spaces \cite[Section 7]{Leinster2010}. Proposition \ref{p:hat-profinite-algebra} and Theorem \ref{t:main-S-finite} together identify a class of codensity monads whose (free) algebras admit a description as algebras of \emph{bona fide} measures, thus providing a setting in which the analogy above is concretely realised.

\begin{example}\label{ex:powerset-vietoris}
Consider the finite power-set monad $\Pfin$ on the category of sets. The algebras for this monad are semilattices (cf.\ Example \ref{ex:semiring-monad}), thus the finitely carried algebras are the finite semilattices. Let $G$ be the functor from the category of finite semilattices and semilattice homomorphisms, to the category $\BStone$ of Boolean spaces, which regards the underlying set of a finite semilattice as a discrete Boolean space. The codensity monad $\K{G}$ of this functor is the \emph{Vietoris monad} on $\BStone$. 
Although this fact can be proved directly, it will follow at once from equation \eqref{eq:limit-formula-locally-finite} below, along with the fact that the Vietoris functor on $\BStone$ preserves codirected limits \cite[{}3.12.27(f)]{Engelking}.
We briefly recall how the Vietoris monad is defined. If $X$ is a Boolean space, write $\V{X}$ for the set of closed subsets of $X$, and equip it with the topology generated by the sets of the form
\[
\boxa C=\{V\in\V{X}\mid V\subseteq C\} \ \ \text{and} \ \ \Diamond C=\{V\in\V{X}\mid V\cap C\neq\emptyset\}
\] 
for $C$ a clopen of $X$. The ensuing topological space is called the \emph{Vietoris hyperspace of $X$}, and is again a Boolean space \cite[Theorem 4.9]{Michael1951}. This is a particular case of the hyperspace of an arbitrary topological space first introduced by Vietoris in 1922, see \cite{Vietoris1923}. For any continuous function $f\colon X\to Y$ between Boolean spaces, defining $\V{f}\colon \V{X}\to\V{Y}$ as the forward image function yields a functor $\V\colon \BStone\to\BStone$. The unit of the Vietoris monad is
\begin{equation*}
\eta_X\colon X\to \V{X}, \ x\mapsto \{x\},
\end{equation*}
and the multiplication is
\begin{equation*}
\mu_X\colon \V^2{X}\to\V{X}, \ S\mapsto \bigcup_{V\in S}{V}.
\end{equation*}
Note that the definition of the components $\mu_X$ goes back at least to \cite[Theorem 5 p.\ 52]{Kuratowski2}.
\end{example}

\subsection{Profinite monads and their algebras}\label{s:profinite-monads}
Profinite monads allow us to associate with every monad $T=(T,\eta,\mu)$ on the category of sets a monad $\p{T}=(\p{T},\w{\eta},\w{\mu})$ on the category of Boolean spaces, called the \emph{profinite monad} of $T$. If $\Algfin{\Set}{T}$ denotes the full subcategory of $\Alg{\Set}{T}$ on the finitely carried $T$-algebras, we can consider the composition
\begin{equation*}
\G\colon \Algfin{\Set}{T} \to \Setfin \to \BStone
\end{equation*}
of the underlying-set functor from finite $T$-algebras to finite sets, followed by the full embedding of finite sets into the category of Boolean spaces. Note that $\Algfin{\Set}{T}$ is essentially small and $\BStone$ is complete, so that Lemma \ref{l:limit-formula} applies to $\G$. The profinite monad $\p{T}$ is defined as the codensity monad of the functor $G$. That is
\begin{equation}\label{eq:profinite-monad}
\p{T}=\K{\G}\colon \BStone \to\BStone, \ \ \p{T}X=\lim_{X\to G(Y,h)}{G(Y,h)}.
\end{equation}
\begin{remark}\label{r:profinite-monads-language-theory}
Profinite monads were first introduced as a natural categorical extension of the profinite algebraic methods which are heavily used in the theory of regular languages. In \cite{Bojanczyk15}, Boja{\'n}czyk associates with a $\Set$-monad $T$ another $\Set$-monad which models the profinite version of the objects modelled by $T$. Profinite monads, as defined above, first appeared in \cite{ChenAMU16}, where it is pointed out that Boja{\'n}czyk's construction can be recovered by composing the monad $\p{T}$ with the adjunction $\beta\colon\Set\leftrightarrows \BStone\cocolon |-|$ in \eqref{eq:Stone-Cech-functor}. 
We point out that in \cite{ChenAMU16} the authors consider, more generally, monads on a variety of Birkhoff algebras $\mathscr{V}$. The associated profinite monad is then a monad on the category of profinite $\mathscr{V}$-algebras. Here, we shall deal only with the case where $\mathscr{V}=\Set$.
The monadic approach to formal language theory, put forward by Boja{\'n}czyk in \emph{op.\ cit.}, is also adopted in \cite{GPR2017} to deal with different kinds of quantifiers at the same time. Since our measure-theoretic representation of the free profinite $S$-semimodules is instrumental to some of the main results in \cite{GPR2017}, in the following we give a complete account of the basic theory of profinite monads.
\end{remark}

The Vietoris hyperspace monad $\V$ on Boolean spaces, which coincides with the profinite monad of the finite power-set monad $\Pfin$ on the category of sets (see Example \ref{ex:powerset-vietoris}), provides a prime example of the profinite monad construction. The original monad $\Pfin$ and its profinite extension $\V$  come equipped with a `comparison map': for each Boolean space $X$ there is a function $\t_X\colon \Pfin{X}\to \V{X}$ which views a finite subset of the space $X$ as a closed subspace.
The ensuing natural transformation $\t$ plays a key r{\^o}le, and can be defined for any profinite monad, as we shall now explain.
Write 
\begin{equation}\label{eq:kappa-universal-nt}
\kappa\colon \p{T}\circ G\nat G
\end{equation}
 for the natural transformation such that the pair $(\p{T},\kappa)$ satisfies the universal property defining the right Kan extension. The forgetful functor $|-|\colon\BStone\to\Set$ is right adjoint, hence it commutes with right Kan extensions \cite[Theorem X.5.1]{MacLane}. That is, $|-|\circ\p{T}$ is the right Kan extension of $|-|\circ G$ along $G$. Now, consider the left-hand diagram below.
\begin{equation*}
\begin{tikzcd}
\Algfin{\Set}{T} \arrow{dd}[swap]{G} \arrow[""{name=U, below}]{rr}{|-|\circ G} & & \Set & & \BStone \arrow{rr}{\p{T}} \arrow{dd}[swap]{|-|} & & \BStone \arrow{dd}{|-|} \\
 & & & & & & &  \\
\BStone \arrow[Rightarrow, to=U, shorten <= 1.6em, shorten >= 1.2em, "|-|\kappa"] \arrow{uurr}[swap, near start, inner sep=0.1ex, ""{name=V, above}]{|-|\circ\p{T}} \arrow[dashed, bend right=30, xshift=10pt, yshift=-2pt]{uurr}[swap, inner sep=0.1ex, ""{name=W, above}]{T\circ|-|}  \arrow[dashed, xshift=5pt, xshift=0pt, yshift=6pt, Rightarrow, from=W, to=V, shorten <= 0.6em, shorten >= -0.1em, "\t", swap] & & & & \Set \arrow{rr}[swap]{T} \arrow[Rightarrow, shorten <= 2.0em, shorten >= 2.0em]{uurr}{\t} & & \Set
\end{tikzcd}
\end{equation*}
There is an obvious natural transformation $\alpha\colon T\circ|-|\circ G\nat |-|\circ G$ whose component at a finite $T$-algebra $(X,h)$ is simply $\alpha_{(X,h)}= h$. Therefore, by the universal property of the right Kan extension $(|-|\circ\p{T},|-|\kappa)$, there is a unique natural transformation $\t\colon T\circ|-|\nat |-|\circ \p{T}$ as in the right-hand diagram above, satisfying 
\begin{equation}\label{eq:property-defining-tau}
|-|\kappa\circ \t G=\alpha.
\end{equation}
In view of equation \eqref{eq:profinite-monad}, the components of the natural transformation $\t$ admit explicit descriptions as limit maps. Since the functor $|-|$ preserves limits, we have
\[
|\p{T}X|=|\lim_{X\to G(Y,h)}{G(Y,h)}|=\lim_{X\to G(Y,h)}{Y}.
\]
In turn, each object $(X\xrightarrow{\phi} G(Y,h),(Y,h))$ of the comma category $X\downarrow G$ yields a function $\phi^*\colon T|X|\to Y$ given by $\phi^*= h\circ T|\phi|$. Note that $\phi^*$ is the unique $T$-algebra morphism extending the function $|\phi|\colon |X|\to Y$. 
\begin{definition}\label{d:nt-tau}
Let $T$ be any monad on $\Set$, and $\p{T}$ its profinite monad. We define $\t\colon T\circ|-|\nat |-|\circ \p{T}$ to be the unique natural transformation satisfying \eqref{eq:property-defining-tau}. For any $X$ in $\BStone$, the component $\t_X\colon T|X|\to|\p{T}X|$ is the unique function induced by the cone 
\[\{\phi^*\colon T|X|\to Y\mid (\phi,(Y,h))\in X\downarrow G\}.\]
\end{definition}

In \cite[Proposition B.7.(a)]{ACMU2015arxiv} the authors prove that the natural transformation $\t$ behaves well with respect to the units and multiplications of the monads $T$ and $\p{T}$. That is, in the terminology of \cite{Street}, $(|-|,\t)\colon (\BStone,\p{T})\to(\Set,T)$ is a \emph{monad functor}. This means that the next two diagrams commute.
\begin{equation}\label{eq:tau-morphism-of-monads}
\begin{tikzcd}
{|-|} \arrow[Rightarrow]{rr}{|-|\w{\eta}} \arrow[Rightarrow]{dr}[swap]{\eta|-|} & & {|-|\circ\p{T}} & & {T^2\circ|-|} \arrow[Rightarrow]{d}[swap]{T\t} \arrow[Rightarrow]{rr}{\mu|-|} & & {T\circ|-|} \arrow[Rightarrow]{d}{\t} \\
{} & {T\circ|-|} \arrow[Rightarrow]{ur}[swap]{\t} & & & {T\circ|-|\circ\p{T}} \arrow[Rightarrow]{r}{\t\p{T}} & {|-|\circ\p{T}^2} \arrow[Rightarrow]{r}{|-|\w{\mu}} & {|-|\circ\p{T}}
\end{tikzcd}
\end{equation}
An immediate consequence of this is that the underlying-set functor $|-|\colon\BStone\to\Set$ lifts to a functor $\Alg{\BStone}{\p{T}}\to\Alg{\Set}{T}$, thus showing that every algebra for the profinite monad $\p{T}$ admits a $T$-algebra reduct, and every morphism of $\p{T}$-algebras preserves this structure. For a free $\p{T}$-algebra $\p{T}X$, its $T$-algebra reduct is provided by the composition
\begin{equation}\label{eq:T-alg-structure-on-T-hat}
\begin{tikzcd}
{T|\p{T}X|} \arrow{r}{\t_{\p{T}X}} & {|\p{T}^2 X|} \arrow{r}{|\w{\mu}_X|} & {|\p{T}X|}.
\end{tikzcd}
\end{equation}
\begin{lemma}\label{l:T-alg-structure-tau-morphism}
For every $\Set$-monad $T$ and Boolean space $X$, the map in \eqref{eq:T-alg-structure-on-T-hat} yields a $T$-algebra structure on \textup{(}the underlying set of\textup{)} the space $\p{T}{X}$ such that the map $\t_X\colon T|X|\to |\p{T}X|$ is a morphism of $T$-algebras.
\end{lemma}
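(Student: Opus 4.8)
Write $\xi_X = |\w{\mu}_X|\circ\t_{\p{T}X}$ for the composite appearing in \eqref{eq:T-alg-structure-on-T-hat}. The plan is to prove both assertions by diagram chasing, drawing only on three ingredients already at hand: the naturality of $\t$, the monad laws for $\p{T}=(\p{T},\w{\eta},\w{\mu})$, and the two squares of \eqref{eq:tau-morphism-of-monads} expressing that $(|-|,\t)$ is a monad functor. Verifying that $\xi_X$ is an Eilenberg--Moore structure map is the substantive part; the morphism property of $\t_X$ will then drop out of a square used anyway.

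First I would check the unit law $\xi_X\circ\eta_{|\p{T}X|}=\mathrm{id}_{|\p{T}X|}$. Expanding, $\xi_X\circ\eta_{|\p{T}X|}=|\w{\mu}_X|\circ\t_{\p{T}X}\circ\eta_{|\p{T}X|}$, and the unit triangle of \eqref{eq:tau-morphism-of-monads} evaluated at the space $\p{T}X$ rewrites $\t_{\p{T}X}\circ\eta_{|\p{T}X|}$ as $|\w{\eta}_{\p{T}X}|$. Hence $\xi_X\circ\eta_{|\p{T}X|}=|\w{\mu}_X\circ\w{\eta}_{\p{T}X}|$, which equals $\mathrm{id}_{|\p{T}X|}$ by the monad unit law $\w{\mu}_X\circ\w{\eta}_{\p{T}X}=\mathrm{id}_{\p{T}X}$ for $\p{T}$.

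The associativity law $\xi_X\circ T\xi_X=\xi_X\circ\mu_{|\p{T}X|}$ is the main computation, and the chief obstacle is simply keeping the bookkeeping straight. Expanding the left-hand side gives $|\w{\mu}_X|\circ\t_{\p{T}X}\circ T|\w{\mu}_X|\circ T\t_{\p{T}X}$. Naturality of $\t$ at the morphism $\w{\mu}_X\colon\p{T}^2 X\to\p{T}X$ reads $\t_{\p{T}X}\circ T|\w{\mu}_X|=|\p{T}\w{\mu}_X|\circ\t_{\p{T}^2 X}$, so the left-hand side becomes $|\w{\mu}_X|\circ|\p{T}\w{\mu}_X|\circ\t_{\p{T}^2 X}\circ T\t_{\p{T}X}$. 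Now the associativity law of $\p{T}$, namely $\w{\mu}_X\circ\p{T}\w{\mu}_X=\w{\mu}_X\circ\w{\mu}_{\p{T}X}$, lets me replace $|\w{\mu}_X|\circ|\p{T}\w{\mu}_X|$ by $|\w{\mu}_X|\circ|\w{\mu}_{\p{T}X}|$; and the multiplication square of \eqref{eq:tau-morphism-of-monads} evaluated at $\p{T}X$ identifies $|\w{\mu}_{\p{T}X}|\circ\t_{\p{T}^2 X}\circ T\t_{\p{T}X}$ with $\t_{\p{T}X}\circ\mu_{|\p{T}X|}$. The left-hand side therefore collapses to $|\w{\mu}_X|\circ\t_{\p{T}X}\circ\mu_{|\p{T}X|}=\xi_X\circ\mu_{|\p{T}X|}$, as desired. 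The one point demanding care is the distinction between $\p{T}\w{\mu}_X$ and $\w{\mu}_{\p{T}X}$, and applying each monad-functor square at the right object; once the bookkeeping is fixed, no genuinely new ingredient is required.

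It remains to observe that $\t_X$ is a morphism of $T$-algebras. Its domain $T|X|$, with structure map $\mu_{|X|}$, is the free $T$-algebra on the set $|X|$, so the claim amounts to the identity $\xi_X\circ T\t_X=\t_X\circ\mu_{|X|}$. But this is exactly the commutativity of the multiplication square of \eqref{eq:tau-morphism-of-monads} at the object $X$ (the same square invoked above, now at $X$ rather than at $\p{T}X$); hence there is nothing further to prove.
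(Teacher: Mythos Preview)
Your proof is correct and is precisely the detailed diagram chase that the paper summarises in a single sentence (``This is a direct consequence of the commutativity of the diagrams in \eqref{eq:tau-morphism-of-monads}''). The approach is the same: the paper simply omits the explicit verification of the unit and associativity laws that you carry out.
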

\begin{proof}
This is a direct consequence of the commutativity of the diagrams in \eqref{eq:tau-morphism-of-monads}.
\end{proof}
In the case of the map $\t_X\colon\Pfin{X}\to\V{X}$ the previous lemma states that the Vietoris space $\V{X}$ is a semilattice when equipped with the binary operation $\cup$, and $\t_X\colon (\Pfin{X},\cup)\to(\V{X},\cup)$ is a semilattice homomorphism.
Another important property of the map $\t_X\colon\Pfin{X}\to\V{X}$ is the well-known fact (see \cite[Theorem 4 p.\ 163]{Kuratowski1}) that it has dense image. In fact, this feature is common to all profinite monads. In the special case of a finite discrete space $X$, this follows from \cite[Proposition B.7.(b)]{ACMU2015arxiv}.
\begin{lemma}\label{l:dense-image}
For every $\Set$-monad $T$ and Boolean space $X$, the map $\t_X\colon T|X|\to |\p{T}X|$ has dense image.
\end{lemma}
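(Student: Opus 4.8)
The plan is to exploit the description of $|\p{T}X|$ as a cofiltered limit of finite discrete spaces coming from \eqref{eq:profinite-monad} and Definition \ref{d:nt-tau}, and to reduce density to a condition checked one projection at a time. Since $|-|$ preserves limits we have $|\p{T}X|=\lim_{(\phi,(Y,h))\in X\downarrow G}{Y}$, with limit projections $\pi_{(\phi,(Y,h))}$ satisfying $\pi_{(\phi,(Y,h))}\circ\t_X=\phi^*$ (this is exactly what it means for $\t_X$ to be induced by the cone $\{\phi^*\}$). The first step is an elementary topological criterion: in any cofiltered limit $L=\lim_i{Y_i}$ of finite discrete spaces the sets $\pi_i^{-1}(\{y\})$ ($y\in Y_i$) form a basis of clopens — cofilteredness lets one rewrite any finite intersection $\bigcap_k\pi_{i_k}^{-1}(S_k)$ as $\pi_j^{-1}(S)$ for a single index $j$ below all the $i_k$ — so a subset $D\subseteq L$ is dense if and only if it meets every nonempty $\pi_i^{-1}(\{y\})$, that is, if and only if $\pi_i(D)=\pi_i(L)$ for every $i$. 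Applying this with $D=\t_X(T|X|)$ and noting that $\pi_{(\phi,(Y,h))}(D)=\mathrm{im}(\phi^*)$, the lemma reduces to proving $\pi_{(\phi,(Y,h))}(|\p{T}X|)\subseteq\mathrm{im}(\phi^*)$ for every object $(\phi,(Y,h))$ of the comma category.

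The heart of the argument is to show that the image of each projection already lies inside the image of the corresponding $\phi^*$. Since $\phi^*\colon(T|X|,\mu_{|X|})\to(Y,h)$ is a $T$-algebra homomorphism out of the free algebra, its image $Y':=\mathrm{im}(\phi^*)$ is a $T$-subalgebra of $(Y,h)$: this follows from the (regular epi, mono) factorisation system on $\Alg{\Set}{T}$ recalled earlier, which factors $\phi^*$ as a surjective homomorphism onto $Y'$ followed by the inclusion of a subalgebra, with $Y'$ finite because $Y$ is. Write $h'$ for the induced structure and $\iota\colon(Y',h')\hookrightarrow(Y,h)$ for the inclusion homomorphism. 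Because $\phi^*$ extends $|\phi|$ along the unit (indeed $\phi^*\circ\eta_{|X|}=h\circ T|\phi|\circ\eta_{|X|}=h\circ\eta_Y\circ|\phi|=|\phi|$), we have $\phi(X)\subseteq Y'$, so $\phi$ corestricts to a continuous map $\phi'\colon X\to GY'$ (here $Y'$ is finite discrete). Hence $(\phi',(Y',h'))$ is an object of $X\downarrow G$ and $\iota$ is a morphism $(\phi',(Y',h'))\to(\phi,(Y,h))$ in it, since $G\iota\circ\phi'=\phi$. Compatibility of the limit cone then yields $\iota\circ\pi_{(\phi',(Y',h'))}=\pi_{(\phi,(Y,h))}$, so that $\pi_{(\phi,(Y,h))}(|\p{T}X|)\subseteq\iota(Y')=Y'=\mathrm{im}(\phi^*)$. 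Combined with the obvious reverse inclusion $\mathrm{im}(\phi^*)=\pi_{(\phi,(Y,h))}(D)\subseteq\pi_{(\phi,(Y,h))}(|\p{T}X|)$, this gives equality for every object, and the density criterion of the first step finishes the proof.

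The main obstacle is conceptual rather than computational: $\phi^*$ need not be surjective onto $Y$, so one cannot expect each projection to be surjective, and the naive hope of matching $\mathrm{im}(\phi^*)$ with all of $Y$ fails. The correct move is to cut $(Y,h)$ down to the subalgebra $Y'=\mathrm{im}(\phi^*)$, verify that it is a genuine object of $X\downarrow G$, and factor $\phi$ through it; once this is arranged, the compatibility of the limit cone forces the projection's image into $Y'$ and density follows formally. I would remark that for finite discrete $X$ this recovers \cite[Proposition B.7.(b)]{ACMU2015arxiv}, whereas the argument above uses no finiteness assumption on $X$.
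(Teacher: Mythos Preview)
Your proof is correct and follows essentially the same route as the paper's: both reduce density to a statement about single projections (using that $X\downarrow G$ is codirected, so the sets $\pi^{-1}(y)$ form a basis), then take the (regular epi, mono) factorisation of $\phi^*$ through its image $Y'$, observe that $(\phi',(Y',h'))$ is an object of the comma category with $\iota$ a morphism to $(\phi,(Y,h))$, and deduce from compatibility of the limit cone that the image of the projection to $Y$ is contained in $\mathrm{im}(\phi^*)$. The only difference is cosmetic: you phrase the reduction as ``$\pi_i(D)=\pi_i(L)$ for all $i$'', whereas the paper phrases it as ``every non-empty subbasic open meets the image'', but these are the same condition.
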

\begin{proof}
Since the category $X\downarrow G$ is codirected, it is enough to show that every non-empty subbasic open set of $\p{T}X$, in the limit topology, contains an element in the image of $\t_X$. Such an open set is of the form $p^{-1}(y)$, where $p\colon \p{T}X\to Y$ is a continuous function in the limit cone defining $\p{T}X$ (cf.\ equation \eqref{eq:profinite-monad}) and $y\in Y$ is in the image of $p$.
More precisely, this means that there exists $(X\xrightarrow{\phi} G(Y,h),(Y,h))$ in the comma category $X\downarrow G$ such that $|p|\circ \t_X=\phi^*\colon T|X|\to Y$. To settle the statement, it thus suffices to prove $(\phi^*)^{-1}(y)\neq\emptyset$.

Recall that $\phi^*$ is the $T$-algebra morphism obtained as the free extension of the function $|\phi|\colon |X|\to Y$. We can then consider its (regular epi, mono) factorisation in the category of $T$-algebras as displayed below.
\[\begin{tikzcd}
(T|X|,\mu_{|X|}) \arrow{rr}{\phi^*} \arrow[twoheadrightarrow]{dr}[swap]{e} & & (Y,h) \\
 & (Y',h') \arrow[rightarrowtail]{ur}[swap]{m} & 
\end{tikzcd}\]
The map $e\colon T|X|\to Y'$ is surjective, hence it is enough to prove $m^{-1}(y)\neq \emptyset$. Note that $m$ is a morphism in the category $X\downarrow G$. Indeed, $e\circ \eta_{|X|}\colon |X|\to Y'$ is the underlying function of a continuous map $\phi'\colon X\to Y'$ (namely, an appropriate corestriction of $\phi$) and $m\circ \phi'=\phi$. Hence, $m\colon (\phi',(Y',h'))\to(\phi,(Y,h))$ is a morphism in $X\downarrow G$. It follows that there exists $p'\colon |\p{T}X|\to Y'$ satisfying $m\circ p'=p$. Since $y$ is in the image of $p$ by hypothesis, it is also in the image of $m$, as was to be shown.
\end{proof}
In general, the morphisms $\t_X\colon T|X|\to |\p{T}X|$ do not have to be injective. A counterexample is provided by the power-set monad $\P$ on $\Set$, whose profinite monad is again the Vietoris monad. In this case the map $\t_X\colon\P{X}\to\V{X}$ sends a subset of $X$ to its topological closure, and it is injective precisely when $X$ is finite. 

However, the components of the natural transformation $\t$ are injective provided the monad $T$ is finitary and restricts to finite sets. To see this observe that, whenever $T$ restricts to finite sets, the underlying-set functor $\Algfin{\Set}{T}\to\Setfin$ is right adjoint and is thus preserved by right Kan extensions. It follows that the limit formula in \eqref{eq:profinite-monad} can be considerably simplified to yield, for every Boolean space $X$,
\begin{equation}\label{eq:limit-formula-locally-finite}
\p{T}{X}=\lim_{X\epifin Y}{TY}.
\end{equation}
Here, the notation $X\epifin Y$ means that $Y$ is a finite continuous image of $X$. Moreover, the limit is computed in $\BStone$ by equipping the finite sets $T{Y}$ with the discrete topology. In this setting the function $\t_X$ is the limit map for the cone
\begin{equation*}
\{T|\phi|\colon T|X|\to TY\mid \phi\colon X\epifin Y\}
\end{equation*}
and hence it is injective if, and only if, this cone is jointly monic. Suppose $f,g\colon S\to T|X|$ are any two functions, and $f(s)\neq g(s)$ for some $s\in S$. If $T$ is finitary, and $\mathcal{F}$ is the collection of finite subsets of $|X|$,
\begin{equation*}
T|X|=T\big(\colim_{F\in\mathcal{F}}{F}\big)=\colim_{F\in\mathcal{F}}{TF}
\end{equation*}
implies the existence of a finite subset $F$ of $X$ such that $f(s),g(s)\in TF$. Since $X$ is a Boolean space, there is a finite discrete space $Z$ and a continuous surjection $\psi\colon X\epifin Z$ such that $\psi$ separates any two distinct elements of $F$. Then $T|\psi|$ distinguishes $f(s)$ and $g(s)$, showing that the cone is jointly monic.

Regarding the injectivity of $\t_X$, it will follow from Proposition \ref{p:hat-profinite-algebra} below that the hypothesis that $T$ restricts to finite sets cannot, in general, be dropped. Indeed, for a finitary monad $T$ and a finite discrete space $X$, injectivity of $\t_X$ corresponds to the free finitely generated $T$-algebra $T|X|$ being residually finite. That is, any two distinct morphisms into $T|X|$ can be separated by a morphism $T|X|\to Y$ with $Y$ a finitely carried $T$-algebra.
In turn, Birkhoff varieties containing no non-trivial finite members (see, e.g., \cite{Austin1965}) yield obvious examples of monads $T$ for which $\t_X$ fails to be injective.

We conclude the section by showing that, whenever the monad $T$ is finitary, $\p{T}X$ is a profinite $T$-algebra, i.e.\ it belongs to the pro-completion $\mathbf{Pro}$-$\Algfin{\Set}{T}$ of $\Algfin{\Set}{T}$ (cf.\ \cite[Chapter VI]{Johnstone1986}). The objects of $\mathbf{Pro}$-$\Algfin{\Set}{T}$ can be identified with those $T$-algebras $Y$ carrying Boolean topologies such that there exists a cone of continuous $T$-algebra morphisms
\[
\{Y\to Y_i\mid i\in I\},
\]
where $\{Y_i\mid i\in I\}$ is a codirected diagram of finite $T$-algebras equipped with the discrete topologies, whose image in $\BStone$ is a limit cone.\footnote{\label{foot:profinite}Let $\mathscr{V}$ be a variety of Birkhoff algebras, and $\mathscr{V}_f$ its full subcategory on the finite members. The pro-completion $\mathbf{Pro}$-$\mathscr{V}_f$ is equivalent to the full subcategory of the category $\BStone\mathscr{V}$, whose objects are topological $\mathscr{V}$-algebras based on Boolean spaces and whose morphisms are continuous homomorphisms, on those objects which are limits of codirected diagrams of finite discrete $\mathscr{V}$-algebras \cite[Corollary VI.2.4]{Johnstone1986}. Since the forgetful functor $\BStone\mathscr{V}\to \BStone$ preserves and reflects limits, this description of $\mathbf{Pro}$-$\mathscr{V}_f$ is equivalent to the one given above, where $T$ is the finitary monad associated with the variety $\mathscr{V}$.} Further, the morphisms in $\mathbf{Pro}$-$\Algfin{\Set}{T}$ can be identified with the continuous $T$-algebra morphisms.
 In fact, $\p{T}X$ is the free profinite $T$-algebra on $X$. That is, $\p{T}$ is the monad induced by the forgetful functor $\mathbf{Pro}$-$\Algfin{\Set}{T}\to\BStone$, and its left adjoint. We thus recover the folklore result stating that, for any Boolean space $X$, the Vietoris space $\V{X}$ is the free profinite semilattice on $X$.
\begin{proposition}\label{p:hat-profinite-algebra}
Let $T$ be a finitary $\Set$-monad, and $X$ a Boolean space. Then $\p{T}X$ is the free profinite $T$-algebra on the Boolean space $X$.
\end{proposition}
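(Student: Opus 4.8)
The plan is to verify the two defining features of a free object: first, that $\p{T}X$ belongs to $\mathbf{Pro}\text{-}\Algfin{\Set}{T}$, and second, that it enjoys the universal property of the free profinite $T$-algebra on $X$ with respect to the forgetful functor $U\colon \mathbf{Pro}\text{-}\Algfin{\Set}{T}\to\BStone$, the insertion of generators being the unit $\w{\eta}_X\colon X\to\p{T}X$ of the codensity monad. Throughout I would work with the limit presentation $\p{T}X=\lim_{X\to G(Y,h)}G(Y,h)$ of \eqref{eq:profinite-monad} (not the simplified \eqref{eq:limit-formula-locally-finite}, which would require $T$ to restrict to finite sets), recalling that the comma category $X\downarrow G$ is codirected, and I equip $\p{T}X$ with the $T$-algebra structure of \eqref{eq:T-alg-structure-on-T-hat} supplied by Lemma \ref{l:T-alg-structure-tau-morphism}. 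Finitariness of $T$ enters only to guarantee that $\Alg{\Set}{T}$ is a variety of Birkhoff algebras, so that the description of $\mathbf{Pro}\text{-}\Algfin{\Set}{T}$ in footnote \ref{foot:profinite} applies.

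The crucial point for the first half is that the limit projections $p_{(\phi,(Y,h))}\colon \p{T}X\to Y$ are $T$-algebra morphisms onto the finite discrete $T$-algebra $(Y,h)$. To establish this I would first observe that, for each finite $T$-algebra $(Y,h)$, the pair $(G(Y,h),\kappa_{(Y,h)})$ is a $\p{T}$-algebra: this follows directly from the description of the unit $\w{\eta}$ and multiplication $\w{\mu}$ of the codensity monad via the universal property, since the algebra unit and associativity laws are precisely the component-wise instances of $\kappa\circ\w{\eta}G=\mathrm{id}_G$ and $\kappa\circ\w{\mu}G=\kappa\circ\p{T}\kappa$. By \eqref{eq:property-defining-tau}, the $T$-algebra reduct of $(G(Y,h),\kappa_{(Y,h)})$ computed through the lift of $|-|$ is exactly $(Y,h)$. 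Next, the pointwise formula for the Kan extension gives $p_{(\phi,(Y,h))}=\kappa_{(Y,h)}\circ\p{T}\phi$; here $\p{T}\phi$ is a morphism of free $\p{T}$-algebras (being $\p{T}$ applied to a map, it is the free extension of $\w{\eta}_{G(Y,h)}\circ\phi$), and $\kappa_{(Y,h)}$ is the structure map of the $\p{T}$-algebra $(G(Y,h),\kappa_{(Y,h)})$, hence also a $\p{T}$-morphism. Applying the forgetful lift $\Alg{\BStone}{\p{T}}\to\Alg{\Set}{T}$ then shows that $p_{(\phi,(Y,h))}$ is a $T$-algebra morphism for the structure of \eqref{eq:T-alg-structure-on-T-hat}.

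With this in hand, the assignment $(\phi,(Y,h))\mapsto (Y,h)$ is a codirected diagram of finite discrete $T$-algebras whose limit in $\BStone$ is $\p{T}X$; since the forgetful functor $\BStone\mathscr{V}\to\BStone$ reflects limits, this exhibits $\p{T}X$ as a codirected limit of finite discrete $T$-algebras, so $\p{T}X\in\mathbf{Pro}\text{-}\Algfin{\Set}{T}$. For the universal property, I would write an arbitrary $A\in\mathbf{Pro}\text{-}\Algfin{\Set}{T}$ as a codirected limit $A=\lim_i A_i$ of finite discrete $T$-algebras with projections $\pi_i\colon A\to A_i$, and take a continuous $f\colon X\to UA$. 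Each pair $(\pi_i\circ f, A_i)$ is an object of $X\downarrow G$, hence yields a projection $p_i=p_{(\pi_i\circ f,A_i)}\colon \p{T}X\to A_i$, which is a $T$-morphism by the previous paragraph. A short check, using that the transition maps of $(A_i)$ are morphisms in $X\downarrow G$, shows the $p_i$ form a cone over $(A_i)$, inducing a unique continuous $T$-morphism $\bar{f}\colon \p{T}X\to A$. The identity $p_{(\psi,(Y,h))}\circ\w{\eta}_X=\psi$ (immediate from the definition of the codensity unit) gives $\pi_i\circ U\bar{f}\circ\w{\eta}_X=\pi_i\circ f$ for all $i$, whence $U\bar{f}\circ\w{\eta}_X=f$ by joint monicity of the $\pi_i$.

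Uniqueness is where Lemma \ref{l:dense-image} enters. If $g,g'\colon\p{T}X\to A$ are continuous $T$-morphisms with $Ug\circ\w{\eta}_X=f=Ug'\circ\w{\eta}_X$, then for fixed $i$ the composites $\pi_i\circ g\circ\t_X$ and $\pi_i\circ g'\circ\t_X$ are $T$-morphisms out of the free algebra $T|X|$ (since $\t_X$ is a $T$-morphism) which, because $\t_X\circ\eta_{|X|}=|\w{\eta}_X|$ by the left triangle of \eqref{eq:tau-morphism-of-monads}, agree on the generators $\eta_{|X|}(|X|)$ and are therefore equal; as $\t_X$ has dense image and $A_i$ is Hausdorff, $\pi_i\circ g=\pi_i\circ g'$, and joint monicity yields $g=g'$. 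Finally, since $\w{\eta}$ is already the unit of $\p{T}$, this identifies $\p{T}$ with the monad induced by $U$ and its left adjoint. I expect the main obstacle to be the crux of the first half, namely that the projections are $T$-morphisms: the density argument used elsewhere is awkward here because the maps to be compared have domain $T|\p{T}X|$, which carries no topology, so it is cleanest to route the argument through the $\p{T}$-algebra structure on the finite quotients as above rather than attempting a direct diagram chase.
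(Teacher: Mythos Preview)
Your proposal is correct and follows the same overall strategy as the paper: first show that the limit projections $\p{T}X\to Y$ are $T$-algebra morphisms (hence $\p{T}X$ is profinite), then establish the universal property using the density of $\t_X$ from Lemma~\ref{l:dense-image} for uniqueness and the limit presentation for existence. The one tactical difference worth noting is in the first step: the paper carries out a direct diagram chase (verifying that a rectangle built from $\t$, $\kappa$, $\w{\mu}$ and $\p{T}\pi_Y$ commutes, using the identities $\kappa\circ\p{T}\kappa=\kappa\circ\w{\mu}G$ and $\pi_Y=\kappa_{(Y,h)}\circ\p{T}\phi$), whereas you route the same ingredients through the observation that $(G(Y,h),\kappa_{(Y,h)})$ is a $\p{T}$-algebra with $T$-reduct $(Y,h)$ and then invoke the lift $\Alg{\BStone}{\p{T}}\to\Alg{\Set}{T}$; your packaging is arguably cleaner, and your derivation of $\pi_Y=\kappa_{(Y,h)}\circ\p{T}\phi$ directly from the pointwise limit formula is more direct than the paper's, which verifies it only after precomposing with $\t_X$. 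Your existence argument, incidentally, coincides with the paper's once unwound: the paper's maps $\phi_i=\pi_i\circ\p{T}f$ are exactly your projections $p_{(\pi_i\circ f,A_i)}$.
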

\begin{proof}
We first show that $\p{T}X$ is a profinite $T$-algebra. Let 
\[
\{\pi_Y\colon \p{T}X\to Y\mid (\phi,(Y,h))\in X\downarrow G\}
\]
be the cone of continuous functions defining $\p{T}X$ as an inverse limit. It suffices to show that each $|\pi_Y|$ is a $T$-algebra morphism.
In turn, this amounts to saying that the outer rectangle below commutes,
\[\begin{tikzcd}
{T|\p{T}X|} \arrow{r}{\t_{\p{T}X}} \arrow{dd}[swap]{T|\pi_Y|} & {|\p{T}^2{X}|} \arrow{r}{|\w{\mu}_X|} \arrow[dashed]{d}{|\p{T}\pi_Y|} & {|\p{T}X|} \arrow{dd}{|\pi_Y|} \\
 & {|\p{T}Y|} \arrow[dashed]{dr}[xshift=-5pt]{|\kappa_{(Y,h)}|} & \\
 TY \arrow[dashed]{ur}{\t_Y} \arrow{rr}{h} & & Y
\end{tikzcd}\]
where $\kappa\colon \p{T}\circ G\nat G$ is as in \eqref{eq:kappa-universal-nt}.
The bottom triangle commutes by \eqref{eq:property-defining-tau}, while the left-hand trapezoid commutes by naturality of $\t$.
Finally, the commutativity of the right-hand trapezoid follows from the equalities $\kappa\circ \p{T}\kappa=\kappa\circ\w{\mu}G$ and $\pi_Y=\kappa_{(Y,h)}\circ \p{T}\phi$. The first one is obtained by noticing that $\w{\mu}\colon \p{T}^2\nat \p{T}$ is the unique natural transformation induced by the universal property of the right Kan extension $(\p{T},\kappa)$ and the natural transformation $\kappa\circ\p{T}\kappa\colon \p{T}^2\circ G\nat G$. 
For the second equality, it suffices to show that $|\pi_Y|\circ\t_X=|\kappa_{(Y,h)}\circ \p{T}\phi|\circ\t_X$. In turn, this follows from naturality of $\t$, and the fact that $|\pi_Y|\circ\t_X=h\circ T|\phi|$.

It remains to prove that $\p{T}X$ satisfies the universal property with respect to the unit $\w{\eta}_X\colon X\to \p{T}X$. That is, for every profinite $T$-algebra $Y$ and every continuous map $f\colon X\to Y$ there is a unique continuous morphism of $T$-algebras $\p{T}X\to Y$ making the following diagram commute.
\begin{equation}\label{eq:universal-property}
\begin{tikzcd}
X \arrow{r}{\w{\eta}_X} \arrow{dr}[swap]{f} & \p{T}X \arrow[dashed]{d} \\
& Y
\end{tikzcd}
\end{equation}
Note that, if such a map exists, it is unique. Indeed, assume $g_1,g_2\colon \p{T}X\to Y$ are continuous $T$-algebra morphisms making diagram \eqref{eq:universal-property} commute. By Lemma \ref{l:dense-image}, along with the fact that $\p{T}X$ is Hausdorff, if we prove $|g_1|\circ \t_X=|g_2|\circ \t_X$ it will follow that $g_1=g_2$. By the universal property of the free $T$-algebra $T|X|$ there is a unique $T$-algebra morphism $\xi\colon T|X|\to |Y|$ extending $|f|\colon |X|\to |Y|$, i.e.\ satisfying
\begin{equation}\label{eq:unique-T-algebra-extension}
\xi\circ |\eta_X|=|f|.
\end{equation}
By Lemma \ref{l:T-alg-structure-tau-morphism} the maps $|g_1|\circ \t_X, |g_2|\circ \t_X$ are $T$-algebra morphisms. In turn, the left-hand diagram in \eqref{eq:tau-morphism-of-monads} entails that they are both solutions to equation \eqref{eq:unique-T-algebra-extension}. We obtain that $|g_1|\circ \t_X=|g_2|\circ \t_X$, whence $g_1=g_2$.

To conclude, we prove that diagram \eqref{eq:universal-property} admits a solution. Let $\{Y_i\mid i\in D\}$ be a codirected diagram of finite discrete $T$-algebras, and $\{\rho_i\colon Y\to Y_i\mid i\in D\}$ a cone of continuous $T$-algebra morphisms whose image in $\BStone$  is a limit cone. Note that each $Y_i$ belongs to the diagram defining $\p{T}Y$ (see equation \eqref{eq:profinite-monad}). Thus, for every $i\in D$, there is a continuous map $\pi_i\colon \p{T}Y\to Y_i$ in the limit cone. As we saw in the first part of the proof, each $|\pi_i|$ is a $T$-algebra morphism. Consider now the continuous map $\phi_i\colon \p{T}X\to Y_i$ defined by $\phi_i=\pi_i\circ\p{T}f$. The function $|\p{T}f|$ is a $T$-algebra morphism, as pointed out before Lemma \ref{l:T-alg-structure-tau-morphism}. It follows that 
\[
\{\phi_i\colon \p{T}X\to Y_i\mid i\in D\}\]
is a cone of continuous $T$-algebra morphisms. Since $Y$ is the limit of $\{Y_i\mid i\in D\}$ in $\mathbf{Pro}$-$\Algfin{\Set}{T}$, there is a unique continuous $T$-algebra morphism $\phi\colon  \p{T}X\to Y$ such that $\rho_i\circ\phi=\phi_i$ for every $i\in D$. We claim that $\phi$ is a solution to \eqref{eq:universal-property}. It is enough to prove $\rho_i\circ \phi\circ\w{\eta}_X= \rho_i\circ f$ for every $i\in D$. In turn, this follows from the definition of $\phi$ and the commutativity of the left-hand diagram in \eqref{eq:tau-morphism-of-monads}.
\end{proof}

Recall from Example \ref{ex:semiring-monad} the notion of semiring monad on $\Set$. We conclude by specialising Proposition \ref{p:hat-profinite-algebra} to the particular case of a semiring monad $\S$, and its profinite monad $\p{\S}$ on $\BStone$.
\begin{corollary}\label{cor:free-prof-semim}
Let $S$ be any semiring, and $X$ a Boolean space. Then $\p{\S}X$ is the free profinite $S$-semimodule on the Boolean space $X$. \qed
\end{corollary}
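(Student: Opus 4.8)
The plan is to obtain the statement as a direct specialisation of Proposition~\ref{p:hat-profinite-algebra} to the monad $T=\S$. Recall from Example~\ref{ex:semiring-monad} that the $\S$-algebras are exactly the $S$-semimodules; consequently, the profinite $\S$-algebras (as described preceding Proposition~\ref{p:hat-profinite-algebra}) are precisely the profinite $S$-semimodules, and morphisms correspond under the same identification. Thus the only hypothesis of Proposition~\ref{p:hat-profinite-algebra} that requires checking is that $\S$ is finitary, and once this is in place the proposition immediately yields that $\p{\S}X$ is the free profinite $\S$-algebra, i.e.\ the free profinite $S$-semimodule, on $X$.

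To verify that $\S$ is finitary, i.e.\ that it preserves filtered colimits, I would argue directly from the description in \eqref{eq:finite-supp-functions}. Every element of $\S{X}$ is a finitely supported function, equivalently a formal sum $\sum_{i=1}^n s_i x_i$ involving only finitely many points of $X$. Hence, given a filtered diagram of sets $(X_j)_j$ with colimit $X$, each element of $\S{X}$ mentions finitely many elements $x_1,\dots,x_n$ of $X$, which all lift, by filteredness, to a single stage $X_j$; this shows that the canonical comparison map $\colim_j \S{X_j}\to \S{X}$ is a bijection, so $\S$ preserves filtered colimits. Equivalently, one may simply note that $S$-semimodules form a variety of Birkhoff algebras, the signature consisting of the binary operation $+_M$, the constant $0_M$, and one unary operation $m\mapsto sm$ for each $s\in S$ (all of finite arity, even when $S$ is infinite); by the correspondence between finitary $\Set$-monads and Birkhoff varieties recalled above, the associated monad $\S$ is finitary.

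There is no substantial obstacle here: the corollary is essentially a dictionary translation of Proposition~\ref{p:hat-profinite-algebra}. The single point deserving care is the finitariness of $\S$, which is exactly where the finiteness of supports in \eqref{eq:finite-supp-functions} is used; everything else is a matter of unwinding the identification of $\S$-algebras with $S$-semimodules.
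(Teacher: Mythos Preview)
Your proposal is correct and follows exactly the paper's approach: the corollary is obtained by specialising Proposition~\ref{p:hat-profinite-algebra} to the semiring monad $\S$, using the identification of $\S$-algebras with $S$-semimodules from Example~\ref{ex:semiring-monad}. The paper treats the finitariness of $\S$ as already established (it is announced at the very start of Section~\ref{s:codensity}), so your explicit verification of this point is a welcome addition rather than a departure.
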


\section{Algebras of measures with values in profinite semirings}\label{s:profinite-idempotent}
In the previous section we saw that, for any semiring $S$, the free profinite $S$-semimodule on a Boolean space $X$ is isomorphic to $\p{\S}X$ (Corollary \ref{cor:free-prof-semim}), where $\p{\S}$ is the profinite monad of the semiring  monad associated with $S$. We are interested in a concrete description of the profinite algebra $\p{\S}X$. It turns out that $\p{\S}X$ can be identified with the algebra of all the $S$-valued measures on $X$ (in the sense of Definition \ref{d:measure}), provided $S$ is \emph{finite}. This is the content of Section \ref{s:finite-case}. In the present section, we deal with the more general case of \emph{profinite} semirings $S$. Here, it is not true that the free profinite $S$-semimodule on a Boolean space $X$ is isomorphic to the algebra of all the $S$-valued measures on $X$ (cf.\ Remark \ref{r:quotient-of-strongly-cont}). However, we shall see in Theorem \ref{t:free-on-idempotent-profinite-rig} that the latter algebra enjoys a universal property relative to those profinite $S$-semimodules in which the scalar multiplication of $S$ is \emph{jointly continuous}. If $S$ is finite, then separate and joint continuity coincide, thus providing the desired measure-theoretic representation of $\p{\S}X$.

Throughout this section we work with a fixed profinite semiring $S=(S,+,\cdot,0,1)$, i.e.\ $S$ is the limit of an inverse system of finite discrete semirings and semiring homomorphisms. Every profinite semiring is a Boolean topological semiring.\footnote{This fact is not specific about semirings, and it holds for any variety of Birkhoff algebras. Cf.\ footnote \ref{foot:profinite}.} In view of \cite[Proposition 7.2]{SZ2017}, the converse is also true. That is, a semiring is profinite if, and only if, it is equipped with a Boolean topology which makes the operations $+$ and $\cdot$ continuous. Every finite semiring, endowed with the discrete topology, is trivially profinite. Two infinite profinite semirings are described in Examples \ref{ex:one-point-extended} and \ref{ex:tropical-semiring} below. 

Before proceeding, we briefly recall some basic facts about \emph{Stone duality} for Boolean algebras \cite{Stone1936} which will be used in the remainder of the paper.
Concisely, Stone duality states that the category $\Bool$ of Boolean algebras and their homomorphisms is dually equivalent to the category $\BStone$ of Boolean spaces and continuous maps. In more detail, the \emph{dual algebra} $B_X$ (or simply $B$, if no confusion arises) of a Boolean space $X$ is the Boolean algebra consisting of the clopen subsets of $X$, equipped with set-theoretic operations. For any continuous map $f\colon X\to Y$ between Boolean spaces, the inverse image function $f^{-1}\colon B_Y\to B_X$ is a Boolean algebra homomorphism. In the converse direction, the \emph{dual space} of a Boolean algebra $B$ is the Boolean space $X_B$ of ultrafilters of $B$. Recall that an \emph{ultrafilter} of $B$ is a proper subset $x\subseteq B$ which is closed under finite meets, upward closed, and for every $a\in B$ it satisfies either $a\in x$ or $\neg a\in x$. The set $X_B$ is equipped with the \emph{Stone topology} generated by the basis of clopens consisting of the sets of the form
\[
\widehat{a}=\{x\in X_B\mid a\in x\},
\] 
for $a\in B$. If $h\colon A\to B$ is a homomorphism of Boolean algebras, the function $h^{-1}\colon X_B\to X_A$ is continuous with respect to the Stone topologies. Stone duality for Boolean algebras states that, up to a natural isomorphism, the ensuing contravariant functors $\Bool\leftrightarrows\BStone$ are inverse to each other. In particular, the isomorphism $a\mapsto \widehat{a}$ allows one to recover the Boolean algebra $B$ as the algebra of clopens of $X_B$.
The connection between Stone duality and profinite algebra is a deep one, and it was fully exposed in \cite{Gehrke2016}.

\begin{example}\label{ex:one-point-extended}
Let $\Nt$ be the (Alexandroff) one-point compactification of the set $\N$ of natural numbers, regarded as a discrete space. That is, $\Nt=\N\cup\{\infty\}$ and its opens are the subsets which are either cofinite, or do not contain $\infty$. It is well-known that $\Nt$ is the dual space of the Boolean subalgebra of $\P(\N)$ consisting of the finite and cofinite subsets of $\N$.
The usual addition and multiplication on $\N$ can be extended to $\Nt$ by setting 
\[
\forall x\in\Nt, \ \ x+\infty=\infty \ \ \text{and} \ \ x\cdot \infty = \begin{cases} 0 & \mbox{if } x=0, \\ \infty & \mbox{otherwise.}  \end{cases}
\]
This gives a semiring $(\Nt,+,\cdot,0,1)$ which is easily seen to be topological, hence profinite. 
\end{example}
\begin{example}\label{ex:tropical-semiring}
We equip the Boolean space $\Nt$, defined in the previous example, with a different semiring structure. Define the addition of the semiring to be the $\min$ operation (with identity element $\infty$), and the multiplication to be $+$. The ensuing idempotent semiring $(\Nt,\min,+,\infty,0)$ is called the \emph{\textup{(}min-plus\textup{)} tropical semiring}. The operations $\min$ and $+$ are continuous with respect to the Boolean topology, hence this is a profinite semiring.
The tropical semiring plays an important r{\^o}le in the theory of formal languages, see e.g.\ the survey \cite{Pin1998}.
\end{example}
Next we introduce the notion of measure, that will play a central r{\^o}le throughout.
\begin{definition}\label{d:measure}
Let $X$ be a Boolean space with dual algebra $B$. An \emph{$S$-valued measure} (or simply a  \emph{measure}, if the semiring is clear from the context) on $X$ is a function $\mu\colon B\to S$ which is finitely additive, i.e.\
\begin{enumerate}
\item $\mu(0)=0$;
\item $\mu(a\vee b)=\mu(a)+\mu(b)$ whenever $a,b\in B$ satisfy $a\wedge b=0$.
\end{enumerate}
Item $2$ can be expressed without reference to disjointness, in the following way:
\[
\forall a,b\in B, \ \mu(a\vee b)+\mu(a\wedge b)=\mu(a)+\mu(b).
\]
\end{definition}
For any Boolean space $X$ with dual algebra $B$, write
\begin{align*}
\M(X,S)=\{\mu\colon B\to S\mid \mu \ \text{is a measure}\}
\end{align*}
for the set of all the $S$-valued measures on $X$. The latter is naturally equipped with a structure of $S$-semimodule, whose operations are computed pointwise:
\[
\forall s\in S \ \forall \mu_1,\mu_2\in\M(X,S), \ \ \mu_1+\mu_2\colon b\mapsto \mu_1(b)+\mu_2(b) \ \ \text{and} \ \ s\cdot \mu_1\colon b\mapsto s\cdot\mu_1(b).
\]

On the other hand, $\M(X,S)$ can also be equipped with a natural topology, namely the subspace topology induced by the product topology of $S^B$. This coincides with the initial topology for the set of \emph{evaluation functions}
\begin{align}\label{eq:evaluation-map}
\ev_b\colon \M(X,S)\to S, \ \ \mu\mapsto \mu(b),
\end{align}
for $b\in B$. Note that $\ev_b$ is the restriction of the $b$-th projection $S^B\to S$.
 A subbasis for this topology is given by the sets of the form 
\begin{align}\label{eq:basic-opens-measures}
\langle b,U\rangle=\{\mu\in\M(X,S)\mid \mu(b)\in U\},
\end{align}
for $b\in B$ and $U$ a clopen subset of $S$. With respect to this topology, $\M(X,S)$ is a Boolean space.

\begin{lemma}\label{l:measures-are-boolean}
For any Boolean space $X$, the space $\M(X,S)$ of all the $S$-valued measures on $X$ is Boolean.
\end{lemma}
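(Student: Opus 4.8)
The plan is to realise $\M(X,S)$ as a closed subspace of the product space $S^B$, and then to invoke the fact that both products and closed subspaces of Boolean spaces are again Boolean.

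First I would note that $S$, being profinite, is a Boolean space, and that Boolean spaces are closed under arbitrary products: Tychonoff's theorem gives compactness, Hausdorffness is preserved by products, and the finite intersections of preimages of clopens along the projections form a basis of clopens for the product topology. Hence $S^B$ is Boolean. Since every closed subspace of a Boolean space is again Boolean (a closed subset of a compact Hausdorff space is compact Hausdorff, and having a basis of clopens passes to subspaces), it suffices to show that $\M(X,S)$ is closed in $S^B$.

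To this end I would exhibit $\M(X,S)$ as an intersection of closed sets, one for each clause of Definition \ref{d:measure}. Condition $1$ defines the preimage of the closed set $\{0\}\subseteq S$ under the $0$-th product projection $S^B\to S$, which is closed since the projection is continuous and $S$ is Hausdorff. For condition $2$, fix a disjoint pair $a,b\in B$ (so $a\wedge b=0$) and consider the map $S^B\to S\times S$ sending $\mu$ to $(\mu(a\vee b),\mu(a)+\mu(b))$. This is continuous because the three projections $\mu\mapsto\mu(a\vee b)$, $\mu\mapsto\mu(a)$, $\mu\mapsto\mu(b)$ are continuous and the addition $+\colon S\times S\to S$ is continuous, $S$ being a topological semiring. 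The set of $\mu$ satisfying condition $2$ for this pair is the preimage of the diagonal of $S\times S$, which is closed precisely because $S$ is Hausdorff. Intersecting the set coming from condition $1$ with all the sets coming from condition $2$, as $(a,b)$ ranges over disjoint pairs, yields exactly $\M(X,S)$; being an intersection of closed sets, it is closed.

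The argument has no genuine obstacle: the only features of $S$ that are used are the continuity of its addition and the Hausdorffness of its underlying space, both guaranteed by profiniteness, and everything else is a routine application of the closure properties of the class of Boolean spaces.
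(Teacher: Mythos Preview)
Your proposal is correct and matches the paper's own proof essentially line for line: both realise $\M(X,S)$ as a closed subspace of the Boolean product $S^B$, handling condition~1 via $\pi_0^{-1}(0)$ and condition~2 via the equaliser of $\pi_{a\vee b}$ and $\pi_a+\pi_b$ (which is exactly your preimage-of-the-diagonal formulation).
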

\begin{proof}
By Tychonov's theorem, the product topology on $S^B$ is compact. Since $S$ admits a basis of clopens, so does $S^B$. Hence, $S^B$ is a Boolean space. Since a closed subspace of a Boolean space is Boolean, it is enough to prove that $\M(X,S)$ is a closed subset of $S^B$. If $b\in B$, write $\pi_b\colon S^B\to S$ for the $b$-th projection. By definition of measure, we have
\begin{align}\label{eq:measure-closed-set}
\M(X,S)=\pi_0^{-1}(0) \ \cap \bigcap_{a\wedge b=0}\{f\in S^B\mid f(a\vee b)=f(a)+ f(b)\}.
\end{align}
The set $\pi_0^{-1}(0)$ is closed because so is $\{0\}\subseteq S$. Further, for each $a,b\in B$,
\[\{f\in S^B\mid f(a\vee b)=f(a)+ f(b)\}\] 
is closed since it is the equaliser of the continuous maps
\[\begin{tikzcd}
S^B \arrow[yshift=3pt]{rr}{\pi_{a\vee b}} \arrow[yshift=-3pt]{rr}[swap]{\pi_{a}+ \, \pi_{b}} & & S
\end{tikzcd}\]
into the Hausdorff space $S$. 
Here, $\pi_{a}+\pi_{b}$ is the composition of the continuous product map $p_i^{-1}(f)\colon S^B\to S^2$ with the continuous map $+\colon S^2\to S$. Thus, equation \eqref{eq:measure-closed-set} exhibits $\M(X,S)$ as an intersection of closed subsets of $S^B$.
\end{proof}
The previous lemma shows that the spaces of measures sit in the category $\BStone$. In fact, they can be seen as limit objects in this category, in the following sense. Let $X$ be a Boolean space, and $\mathcal{D}=\{X_i\mid i\in D\}$ the codirected diagram of the finite discrete spaces `under' $X$ (i.e.\ $\mathcal{D}$ is obtained from the comma category $X \downarrow (\Setfin\to\BStone)$ by forgetting the component which specifies the map from $X$). Each function $\phi_{ij}\colon X_i\to X_j$ in this diagram yields a continuous map $\S{\phi_{ij}}\colon S^{X_j}\to S^{X_i}$, where $S^{X_i}$ and $S^{X_i}$ are equipped with the product topologies. Since a product of Boolean spaces is again Boolean, we get a codirected diagram $\mathcal{D}'=\{S^{X_i}\mid i\in D\}$ in $\BStone$.
\begin{lemma}
The limit in $\BStone$ of the diagram $\mathcal{D}'=\{S^{X_i}\mid i\in D\}$ is the space of measures $\M(X,S)$.
\end{lemma}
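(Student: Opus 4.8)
The plan is to identify the space of measures $\M(X,S)$ with the limit $\lim_{i\in D} S^{X_i}$ by exhibiting an explicit homeomorphism and, to do so, to exploit Stone duality to reinterpret the diagram $\mathcal D'$ in terms of the dual Boolean algebra $B$ of $X$. The key observation is that for each finite discrete space $X_i$ (a finite quotient $X\twoheadrightarrow X_i$ of $X$), the free $S$-semimodule $S^{X_i}$ can be viewed as the set of $S$-valued functions on the finite set $X_i$, equivalently as the $S$-valued measures on the finite Boolean algebra $B_i$ of subsets of $X_i$. Under the quotient map $X\twoheadrightarrow X_i$, the algebra $B_i$ embeds as a finite subalgebra of $B$; and since $X$ is a Boolean space, these finite subalgebras are directed and their union is all of $B$. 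So the plan is to realise $\mathcal D'$ as the diagram of measure spaces $\M(X_i,S)$ indexed by the finite subalgebras of $B$, with the connecting maps being restriction of measures along the inclusions.

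First I would make precise the identification $S^{X_i}\cong \M(X_i,S)$: an $S$-valued function $f$ on the finite set $X_i$ corresponds to the measure on the atomic Boolean algebra $B_i$ sending a subset $A\subseteq X_i$ to $\sum_{x\in A} f(x)$, and conversely a measure is determined by its values on atoms. This is a homeomorphism of Boolean spaces, where $\M(X_i,S)$ carries the topology from \eqref{eq:basic-opens-measures}. I would check that the connecting map $\S{\phi_{ij}}\colon S^{X_j}\to S^{X_i}$ corresponds, under this identification, to the restriction map $\M(X_j,S)\to\M(X_i,S)$ along the Boolean-algebra inclusion $B_i\hookrightarrow B_j$ induced by $\phi_{ij}\colon X_i\twoheadrightarrow X_j$; this is where one verifies that pushing a function forward along $\phi_{ij}$ (summing over fibres, as in Example \ref{ex:semiring-monad}) matches restricting the associated measure. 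Granting this, the limit $\lim_i S^{X_i}$ becomes $\lim_i \M(X_i,S)$, the inverse limit of measure spaces on the finite subalgebras $B_i$ of $B$ under restriction.

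The heart of the argument is then a compatibility-gluing statement: a point of $\lim_i \M(X_i,S)$ is a family $(\mu_i)_{i}$ with $\mu_i\in\M(X_i,S)$ agreeing on overlaps, and such a compatible family is exactly a finitely additive function $\mu\colon B\to S$, because every element of $B$ lies in some $B_i$ (as $\bigcup_i B_i=B$) and the two finite-additivity conditions of Definition \ref{d:measure} are checked within a single finite subalgebra containing the relevant elements. Thus the canonical map $\M(X,S)\to\lim_i \M(X_i,S)$ sending $\mu$ to its restrictions $(\mu|_{B_i})_i$ is a bijection. I would conclude by checking it is a homeomorphism: both sides carry the initial topology for the evaluation maps $\ev_b$ (for the limit, the evaluations at elements $b$ of the finite subalgebras generate the same subbasis as in \eqref{eq:basic-opens-measures}), so the bijection is continuous with continuous inverse, and since both are Boolean spaces (Lemma \ref{l:measures-are-boolean}) a continuous bijection is automatically a homeomorphism.

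The main obstacle I anticipate is bookkeeping rather than conceptual: verifying that the pushforward maps $\S{\phi_{ij}}$ of Example \ref{ex:semiring-monad} translate correctly into restriction-of-measures along $B_i\hookrightarrow B_j$, and doing so coherently across the whole diagram so that the identification $S^{X_i}\cong\M(X_i,S)$ is genuinely natural in $i$. One must be careful that the Boolean-algebra inclusion runs opposite to the space map $\phi_{ij}\colon X_i\to X_j$ and that summing an $S$-valued function over the fibres of $\phi_{ij}$ is precisely the operation that sends the measure associated with the source function to the one associated with the pushforward; getting these variances aligned is the only delicate point. Everything else — the directedness of the finite subalgebras, the gluing of compatible families into a single measure, and the topological comparison — follows routinely from the properties of Boolean spaces and Stone duality recalled above.
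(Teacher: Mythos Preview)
Your approach is correct and genuinely different from the paper's. The paper verifies the limit universal property directly: it builds the cone $\pi_i^*\colon \M(X,S)\to S^{X_i}$ via pushforward of measures, and then, given an arbitrary cone $\{f_i\colon Y\to S^{X_i}\}$, constructs the mediating map $\xi\colon Y\to\M(X,S)$ by hand, exploiting that every clopen $b\in B$ is itself one of the indices (via its characteristic function $\chi_b\colon X\to\2$) so that $\xi(y)(b)$ can be read off from the single component $f_{i_b}$. You instead dualise the entire diagram through Stone duality: after identifying $S^{X_i}$ with $\M(X_i,S)$ and the connecting maps with restriction along inclusions of finite Boolean subalgebras of $B$, the statement collapses to the elementary fact that a finitely additive map on $B=\bigcup_i B_i$ is the same datum as a compatible family of finitely additive maps on the $B_i$. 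Your route makes the appearance of $\M(X,S)$ conceptually transparent and the topology comparison trivial; the paper's route is more self-contained and avoids appealing to the (easy) cofinality of the surjective quotients within the full comma category, which your argument tacitly uses.

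One correction on the variance you rightly flagged as delicate: the surjection $\phi_{ij}\colon X_i\twoheadrightarrow X_j$ dualises to $B_j\hookrightarrow B_i$, not $B_i\hookrightarrow B_j$, so restriction runs $\M(X_i,S)\to\M(X_j,S)$, matching the covariant direction $\S\phi_{ij}\colon S^{X_i}\to S^{X_j}$. With that swap, the naturality you need goes through exactly as you outlined.
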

\begin{proof}
For each $i\in D$, the corresponding continuous function $\pi_i\colon X\to X_i$ yields a map $\pi_i^*\colon \M(X,S)\to\M(X_i,S)$ sending a measure to its pushforward along $\pi_i$. That is, for $\mu\in \M(X,S)$ and $b$ a clopen of $X_i$, $\pi_i^*\mu(b)=\mu(\pi_i^{-1}(b))$. It is not difficult to see that $\pi_i^*$ is continuous. Composing with the continuous map 
\[
\M(X_i,S)\to S^{X_i}, \ \mu\mapsto \big( x\mapsto \mu(x) \big)
\]
(the expression $\mu(x)=\mu(\{x\})$ makes sense because $X_i$ is discrete), we get a continuous map $\M(X,S)\to S^{X_i}$ that we denote again by $\pi_i^*$. For every function $\phi_{ij}\colon X_i\to X_j$ in $\mathcal{D}$ we have a commutative diagram
\[\begin{tikzcd}
\M(X_i,S) \arrow{r}{\phi_{ij}^*} \arrow{d} & \M(X_j,S) \arrow{d} \\
S^{X_i} \arrow{r}{\S{\phi_{ij}}} & S^{X_j}
\end{tikzcd}\] 
so that $\S{\phi_{ij}}\circ \pi_i^*=\pi_j^*$. We claim that $\{\pi_i^*\colon \M(X,S)\to S^{X_i}\mid i\in D\}$ is a limit cone in $\BStone$.

Let $Y$ be a Boolean space, and $\{f_i\colon Y\to S^{X_i}\mid i\in D\}$ a cone of continuous functions. We show that there exists a unique continuous function $\xi\colon Y\to\M(X,S)$ such that $f_i=\pi_i^*\circ \xi$ for every $i\in D$. Given a clopen $b$ of $X$, consider its characteristic function $\chi_b\colon X\to \2$ into the discrete two-element space $\2=\{0,1\}$. Then there exists $i_b\in D$ such that $\pi_{i_b}=\chi_b$. Define
\[
\xi\colon Y\to\M(X,S), \ y\mapsto \big(b\mapsto f_{i_b}(y)(1), \ \text{where} \ f_{i_b}(y)\in S^{\2}\big).
\]
It is not difficult to see that $\xi$ is well-defined. Further, it is continuous because for every $b\in B$ and every clopen $U\subseteq S$, $\xi^{-1}(\langle b,U\rangle)=(\pi_1\circ f_{i_b})^{-1}(U)$, where $\pi_1\colon S^{\2}\to S$ is the evaluation at $1$. To see that $f_i=\pi_i^*\circ \xi$ for every $i\in D$, fix such an $i$ and consider an arbitrary $x\in X_i$. It suffices to show that $\pi_x\circ f_i=\pi_x\circ \pi_i^*\circ \xi$, where $\pi_x\colon S^{X_i}\to S$ is the $x$-th projection. If $b=\pi_i^{-1}(x)$, we have a commutative diagram as follows.
\[\begin{tikzcd}
{} & X \arrow{dl}[swap]{\pi_i} \arrow{dr}{\pi_{i_b}} & \\
X_i \arrow{rr}{\chi_{\{x\}}} & {} & \2
\end{tikzcd}\]
Therefore, $\S{\chi_{\{x\}}}\circ \pi_i^*=\pi_{i_b}^*$. It follows that
\[
\pi_x\circ f_i=\pi_1\circ \S{\chi_{\{x\}}}\circ f_i=\pi_1\circ f_{i_b}=\pi_1\circ \pi_{i_b}^*\circ \xi=\pi_1\circ \S{\chi_{\{x\}}}\circ \pi_i^*\circ \xi=\pi_x\circ \pi_i^*\circ \xi,
\]
as was to be shown. For the uniqueness of $\xi$, suppose $\zeta\colon Y\to\M(X,S)$ is another continuous map such that $f_i=\pi_i^*\circ\zeta$ for every $i\in D$. Then, for all $y\in Y$ and $b\in B$,
\[
\zeta(y)(b)=\pi_1\circ\pi_{i_b}^*\circ \zeta(y)=\pi_1\circ f_{i_b}(y)=\pi_1\circ \pi_{i_b}^*\circ \xi(y)=\xi(y)(b)
\]
showing that $\zeta=\xi$.
\end{proof}
We will see in Lemma \ref{l:measures-strongly-continuous} below that the $S$-semimodule structure on $\M(X,S)$ is compatible with the Boolean topology in a strong sense. Recall from Definition \ref{d:semiring-and-semimodule} that a semimodule over $S$ is given by an Abelian monoid $M$, along with a `scalar multiplication'
\begin{align*}
\alpha\colon S\times M\to M
\end{align*}
satisfying certain compatibility conditions. Suppose $M$ is equipped with a topology making the monoid operation continuous, that is $M$ is a topological monoid. If $\alpha$ is separately continuous, i.e.\ the functions $\alpha(s,-)\colon M\to M$ are continuous for each $s\in S$, then $M$ is called a \emph{topological $S$-semimodule}. Further,
\begin{definition}\label{d:strongly-continuous}
An $S$-semimodule $M$ is \emph{strongly continuous} if the scalar multiplication $\alpha$ of $S$ on $M$ is not only separately continuous, but also jointly continuous. That is, $\alpha\colon S\times M\to M$ is continuous with respect to the product topology on $S\times M$.
\end{definition}
Not every topological $S$-semimodule is strongly continuous, as the next example shows.
\begin{example}\label{ex:prof-semimod-not-strong}
We give an example of a finite and discrete $S$-semimodule which is not strongly continuous.
Denote by $A$ the semilattice on the set $\{0,1,\omega\}$ whose order is $0<1<\omega$.
Recall from Example \ref{ex:one-point-extended} the profinite semiring $(\Nt,+,\cdot,0,1)$. The obvious action of $\N$ on $A$ (obtained by regarding $A$ as an Abelian monoid) can be extended to an action $\alpha\colon\Nt\times A\to A$ of $\Nt$ on $A$ by setting $\alpha(\infty,0)=0$, and $\alpha(\infty,1)=\alpha(\infty,\omega)=\omega$. This action yields a structure of $\Nt$-semimodule on $A$. If $A$ is equipped with the discrete topology, then the scalar multiplication is obviously separately continuous. However, it is not jointly continuous. Indeed, one has
\[
\alpha^{-1}(\omega)=(\infty,1)\cup (\Nt\setminus\{0\})\times \{\omega\},
\]
which is not clopen because $\infty$ is not an isolated point of $\Nt$.
\end{example}
The spaces of measures $\M(X,S)$ turn out to be strongly continuous, hence topological, $S$-semimodules.
\begin{lemma}\label{l:measures-strongly-continuous}
For any Boolean space $X$, $\M(X,S)$ is a strongly continuous $S$-semimodule. 
\end{lemma}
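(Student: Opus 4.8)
The plan is to exploit the defining feature of the topology on $\M(X,S)$, namely that it is the initial topology for the family of evaluation maps $\ev_b\colon \M(X,S)\to S$, $b\in B$. By the universal property of the initial topology, a function $g\colon Z\to \M(X,S)$ from a topological space $Z$ is continuous if and only if $\ev_b\circ g$ is continuous for every $b\in B$. Thus, to prove that $\M(X,S)$ is a strongly continuous $S$-semimodule, it suffices to verify this criterion for the addition map $+\colon \M(X,S)\times \M(X,S)\to \M(X,S)$ and for the scalar multiplication $\alpha\colon S\times \M(X,S)\to \M(X,S)$, using the fact that both operations are computed pointwise.

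First I would treat the addition. For each clopen $b\in B$, the composite $\ev_b\circ {+}$ sends $(\mu_1,\mu_2)$ to $\mu_1(b)+\mu_2(b)$; this is precisely the composition
\[
\M(X,S)\times \M(X,S)\xrightarrow{\ev_b\times\ev_b} S\times S\xrightarrow{\;+\;} S.
\]
The first map is continuous because each $\ev_b$ is (being the restriction of a projection $S^B\to S$), and the second is continuous because $S$ is a topological semiring. Hence $\ev_b\circ{+}$ is continuous for every $b$, and so $+$ is continuous. This shows that $\M(X,S)$ is a topological monoid.

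Next, for the joint continuity of the scalar multiplication, I would argue entirely analogously. For each $b\in B$, the composite $\ev_b\circ\alpha$ sends $(s,\mu)$ to $s\cdot\mu(b)$, which factors as
\[
S\times \M(X,S)\xrightarrow{\mathrm{id}_S\times\ev_b} S\times S\xrightarrow{\;\cdot\;} S.
\]
Again the first map is continuous since $\ev_b$ is, and the second is the multiplication of $S$, which is continuous. Therefore $\ev_b\circ\alpha$ is continuous for every $b$, whence $\alpha$ is continuous with respect to the \emph{product} topology on $S\times \M(X,S)$; that is, the scalar multiplication is jointly continuous. Combining the two parts yields that $\M(X,S)$ is a strongly continuous $S$-semimodule.

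There is no serious obstacle here: the argument is a direct transfer of the continuity of the semiring operations $+$ and $\cdot$ on $S$ through the evaluation maps, made possible by the fact that the topology on $\M(X,S)$ is initial for these maps and that the operations are defined pointwise. The only point requiring a modicum of care is the passage from the separate evaluation maps to the product $S\times \M(X,S)$, which is handled by the standard fact that a product of continuous maps is continuous for the product topologies; it is exactly this step that upgrades separate continuity (which would follow by fixing $s$) to the joint continuity demanded by Definition \ref{d:strongly-continuous}.
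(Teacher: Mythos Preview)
Your proposal is correct and follows essentially the same approach as the paper: both reduce continuity of $+$ and of the scalar multiplication to checking the compositions with each $\ev_b$, and factor these as $(\ev_b\times\ev_b)$ followed by $+\colon S\times S\to S$, respectively $(\mathrm{id}_S\times\ev_b)$ followed by $\cdot\colon S\times S\to S$. The paper presents the argument via a commutative square and then says ``the same argument, mutatis mutandis'' for the scalar multiplication, whereas you spell out both cases explicitly; the content is the same.
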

\begin{proof}
Let $X$ be an arbitrary Boolean space with dual algebra $B$. To prove that $\M(X,S)$ is a topological monoid it suffices to show that, for each $b\in B$, the composition
\[\begin{tikzcd}
\M(X,S)\times \M(X,S) \arrow{r}{+} & \M(X,S) \arrow{r}{\ev_b} & S
\end{tikzcd}\] 
is continuous, where $\ev_b$ is the evaluation map defined in \eqref{eq:evaluation-map}.
In turn, this follows from the commutativity of the next diagram, and the fact that $+\colon S\times S\to S$ is continuous.
\[\begin{tikzcd}[column sep=2cm, row sep=0.8cm]
\M(X,S)\times \M(X,S) \arrow{r}{\ev_b\times \ev_b} \arrow{d}[swap]{+}  & S\times S \arrow{d}{+} \\
\M(X,S) \arrow{r}{\ev_b} & S
\end{tikzcd}\]
The same argument, mutatis mutandis, shows that the function $S\times \M(X,S)\to\M(X,S)$ taking $(s,\mu)$ to $s\cdot\mu$ is continuous. Therefore, $\M(X,S)$ is a strongly continuous $S$-semimodule.
\end{proof}
By Lemmas \ref{l:measures-are-boolean} and \ref{l:measures-strongly-continuous}, for any Boolean space $X$, $\M(X,S)$ is a strongly continuous topological $S$-semimodule on a Boolean space. In \cite[Proposition 7.5]{SZ2017} it is shown that every such semimodule is the inverse limit of a system of finite and discrete strongly continuous $S$-semimodules. In particular, $\M(X,S)$ is a profinite $S$-semimodule. However, $\M(X,S)$ is \emph{not}, in general, the free profinite $S$-semimodule on $X$ (cf.\ Remark \ref{r:quotient-of-strongly-cont}). Nonetheless, we will see in Theorem \ref{t:free-on-idempotent-profinite-rig} that $\M(X,S)$ enjoys a universal property relative to those Boolean topological $S$-semimodules that are strongly continuous. In order to prove the latter theorem we need a preliminary result, Lemma \ref{l:functions-sit-densely} below, relating finitely supported functions and measures on $X$.

Let $X$ be a Boolean space. Recall from equation \eqref{eq:finite-supp-functions} the set $\S|X|$ of finitely supported $S$-valued functions on $X$. Every $f\in\S|X|$ gives a measure on $X$, namely
\begin{align}\label{eq:integral-of-a-function}
\int{f}\colon B\to S, \ \ b\mapsto \int_b{f}=\sum_{x\in b}{f(x)}.
\end{align}
(Throughout, we identify a sum over the empty set with the identity element $0$). The `integration map' $f\mapsto \int{f}$ allows us to identify $\S|X|$ with a dense subset of $\M(X,S)$. 
\begin{lemma}\label{l:functions-sit-densely}
The map $\S|X|\to\M(X,S)$ sending $f$ to $\int{f}$, defined as in \eqref{eq:integral-of-a-function}, is injective with dense image.
\end{lemma}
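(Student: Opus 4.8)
First I would establish that the map $f\mapsto\int f$ is injective. Suppose $f,g\in\S|X|$ with $f\neq g$, so there is a point $x_0\in X$ with $f(x_0)\neq g(x_0)$. Since $X$ is a Boolean space and the supports of $f$ and $g$ are finite, I can use the fact that points of a Boolean space are separated by clopens to find a clopen $b\in B$ containing $x_0$ but none of the other (finitely many) points in $\mathrm{supp}(f)\cup\mathrm{supp}(g)$. Then $\int_b f=f(x_0)$ and $\int_b g=g(x_0)$ by \eqref{eq:integral-of-a-function}, so $\int f$ and $\int g$ disagree on $b$, hence $\int f\neq\int g$. This is the easy half.

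**Density.** The substance is density. The plan is to show that every nonempty basic open set of $\M(X,S)$ meets the image of the integration map. A basic open is a finite intersection of subbasic opens $\langle b,U\rangle$ from \eqref{eq:basic-opens-measures}, so fix finitely many clopens $b_1,\dots,b_n\in B$ and clopen neighbourhoods $U_1,\dots,U_n\subseteq S$, and suppose there is a measure $\mu$ with $\mu(b_j)\in U_j$ for all $j$; I must produce $f\in\S|X|$ whose integral also lies in every $\langle b_j,U_j\rangle$. The natural move is to pass to the finite Boolean subalgebra $B_0\subseteq B$ generated by $b_1,\dots,b_n$, with atoms $a_1,\dots,a_m$. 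Each $b_j$ is a join of atoms, and by finite additivity $\mu(b_j)=\sum_{a_k\leq b_j}\mu(a_k)$. For each atom $a_k$ (a nonempty clopen) I choose a point $x_k\in a_k$ and define $f=\sum_{k=1}^m\mu(a_k)\,x_k$, i.e. the finitely supported function sending $x_k$ to $\mu(a_k)$ and everything else to $0$. By construction $\int_{b_j}f=\sum_{x\in b_j}f(x)=\sum_{a_k\leq b_j}\mu(a_k)=\mu(b_j)\in U_j$ for each $j$, since each chosen point $x_k$ lies in $b_j$ exactly when $a_k\leq b_j$. Thus $\int f\in\bigcap_j\langle b_j,U_j\rangle$, and the image is dense.

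**The main obstacle.** The only delicate point is the matching of supports to atoms: I must ensure that each selected representative $x_k$ contributes to $\int_{b_j}f$ precisely when the atom $a_k$ is below $b_j$. This is exactly what holds because $a_1,\dots,a_m$ are the atoms of the subalgebra generated by the $b_j$, so every $b_j$ is a disjoint join of atoms, and distinct atoms are disjoint clopens, guaranteeing $x_k\in b_j\iff a_k\leq b_j$ with no overlap. One must also note that $B_0$ being finite makes the atoms well-defined and the additivity computation finite; no convergence or continuity of $\mu$ is needed here, only finite additivity from Definition~\ref{d:measure}. I would state the proof in this order: first injectivity via clopen separation, then density via the finite subalgebra and atomwise construction, highlighting that the reconstruction $\int f$ reproduces $\mu$ exactly on $B_0$.
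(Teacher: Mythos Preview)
Your proof is correct and follows essentially the same route as the paper: injectivity via a clopen isolating the distinguishing point from the rest of the combined supports, and density via the atoms of the finite Boolean subalgebra generated by $b_1,\dots,b_n$, choosing a representative in each atom and assigning it the value of the witnessing measure $\mu$ on that atom. The only cosmetic difference is that the paper restricts attention to the atoms lying below $\bigcup_i b_i$ (ignoring the possible complementary atom), which is harmless since that atom contributes nothing to any $\int_{b_j} f$.
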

\begin{proof}
To prove the injectivity, assume $f,g$ are distinct elements of $\S|X|$ and pick $x\in X$ such that $f(x)\neq g(x)$. Write $\sigma$ for the union of the supports of $f$ and $g$, and note that $x\in \sigma$. Since $X$ is Boolean, there is a clopen $b\in B$ such that $b\cap \sigma=\{x\}$. Therefore
\[
\int_b{f}=f(x)\neq g(x) =\int_b{g},
\]
showing that the assignment $f\mapsto \int f$ is injective.
With respect to the density, we must prove that every non-empty basic open subset of $\M(X,S)$ contains a measure of the form $\int f$, for some $f\in \S|X|$. In view of equation \eqref{eq:basic-opens-measures}, such a basic open can be written as
\[
O=\langle b_1,U_1\rangle\cap\cdots\cap\langle b_m,U_m\rangle
\]
where $b_1,\ldots, b_m\in B$, and $U_1,\ldots,U_m$ are clopens of $S$. Let $\{c_1,\ldots,c_n\}$ be the clopen partition of the  set $\bigcup_{i=1}^{m}{b_i}$ induced by the covering $\{b_1,\ldots,b_m\}$, and assume without loss of generality that each $c_j$ is non-empty. In other words, the $c_j$'s are the atoms of the Boolean subalgebra of $B$ generated by the $b_i$'s. Fix an element $x_j\in c_j$ for each $j=1,\ldots,n$. Since $O$ is not empty, it contains a measure $\mu$. Define a function $f\colon X\to S$ with support $\{x_1,\ldots,x_n\}$ such that $f(x_j)=\mu(c_j)$ for each $j$. By finite additivity of $\mu$ we have $\int_{b_i}{f}=\mu(b_i)$ for all $i=1,\ldots,m$, so that $\int f\in O$.
\end{proof}

We are now ready to prove the main result of this section, which provides a characterisation of the profinite algebra $\M(X,S)$ by means of a universal property. Let us say that a strongly continuous $S$-semimodule is \emph{profinite} if it is the inverse limit of finite and discrete strongly continuous $S$-semimodules. As observed after Lemma \ref{l:measures-strongly-continuous}, $\M(X,S)$ is a profinite strongly continuous $S$-semimodule. The next theorem shows that $\M(X,S)$ is free on $X$ with respect to this structure.
\begin{theorem}\label{t:free-on-idempotent-profinite-rig}
Let $S$ be a profinite semiring. For any Boolean space $X$, the collection $\M(X,S)$ of all the $S$-valued measures on $X$ is the free profinite strongly continuous $S$-semimodule on $X$.
\end{theorem}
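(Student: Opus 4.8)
The goal is to show $\M(X,S)$ is the free profinite strongly continuous $S$-semimodule on $X$. This means establishing a universal property: for every profinite strongly continuous $S$-semimodule $N$ and every continuous map $f\colon X\to N$, there is a unique continuous $S$-semimodule morphism $\overline{f}\colon \M(X,S)\to N$ extending $f$ along the unit $X\to \M(X,S)$, $x\mapsto \int\eta_{|X|}(x)$ (i.e.\ the Dirac measure at $x$). I have three ingredients already in hand: $\M(X,S)$ is a profinite strongly continuous $S$-semimodule (Lemmas \ref{l:measures-are-boolean} and \ref{l:measures-strongly-continuous}, plus the cited \cite[Proposition 7.5]{SZ2017}); the finitely supported functions $\S|X|$ sit inside $\M(X,S)$ as a dense sub-$S$-semimodule via integration (Lemma \ref{l:functions-sit-densely}); and $\S|X|$ is itself the free $S$-semimodule on $|X|$ in the purely algebraic sense.

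**The plan, step by step.**

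\textbf{Step 1 (Uniqueness).} First I would settle uniqueness, which is the easy half. Given $f\colon X\to N$, by Lemma \ref{l:functions-sit-densely} the image of $\S|X|$ is dense in $\M(X,S)$, and $N$ is Hausdorff (being a Boolean space). Thus any continuous morphism $\overline{f}$ is determined by its restriction to the dense subset $\S|X|$. But on $\S|X|$ an $S$-semimodule morphism is forced: since $\S|X|$ is the free $S$-semimodule on $|X|$, the value of $\overline{f}$ on $\int(\sum_i s_i x_i)$ must be $\sum_i s_i f(x_i)$. Hence at most one continuous extension exists.

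\textbf{Step 2 (Defining the extension algebraically, then extending by continuity).} For existence, I would first define $\overline{f}$ on the dense subset $\S|X|$ by the forced formula above; freeness of $\S|X|$ as an abstract $S$-semimodule guarantees this is a well-defined $S$-semimodule morphism $\S|X|\to N$. The crux is then extending this continuously to all of $\M(X,S)$. Here I would exploit profiniteness of $N$: write $N=\lim_{j}N_j$ as an inverse limit of finite discrete strongly continuous $S$-semimodules with projections $\rho_j\colon N\to N_j$. It suffices to produce, for each $j$, a continuous morphism $g_j\colon \M(X,S)\to N_j$ compatible with the $\rho_j\circ\overline{f}|_{\S|X|}$, and then assemble them into the limit. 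Each composite $\rho_j\circ f\colon X\to N_j$ lands in a \emph{finite} space, so it factors through some finite continuous quotient $\pi\colon X\epifin Z$; the induced algebraic map $\S Z\to N_j$ (free extension of $Z\to N_j$) pushes forward to a continuous map out of $\M(Z,S)$, and precomposing with the pushforward $\M(X,S)\to\M(Z,S)$ yields the desired $g_j$.

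\textbf{Step 3 (Verifying continuity and the morphism property of the limit).} Finally I would check that the assembled map $\overline{f}=\lim_j g_j\colon \M(X,S)\to N$ is continuous (automatic from the universal property of the limit, once the $g_j$ are continuous and compatible) and is an $S$-semimodule morphism. The latter uses that each $N_j$ is \emph{strongly continuous}: the addition and the scalar multiplication on $N$ are jointly continuous, so the two composites $\M(X,S)\times\M(X,S)\to N$ (namely $\overline{f}\circ(+)$ and $(+)\circ(\overline{f}\times\overline{f})$) are both continuous, agree on the dense subset $\S|X|\times\S|X|$ by Step 2, and hence agree everywhere; similarly for scalar multiplication, where I use joint continuity of $\alpha\colon S\times N\to N$ to pass from agreement on the dense image to agreement on all of $S\times\M(X,S)$.

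**The main obstacle.**

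The delicate point is precisely where \emph{strong} continuity is indispensable. Agreement of two continuous maps on a dense subset forces agreement everywhere \emph{only when the relevant operations are jointly continuous}; if scalar multiplication on $N$ were merely separately continuous, the map $S\times\M(X,S)\to N$ built from $\overline{f}$ need not be continuous, and the density argument in Step 3 would break down (this is exactly the phenomenon behind Example \ref{ex:prof-semimod-not-strong} and Remark \ref{r:quotient-of-strongly-cont}, explaining why $\M(X,S)$ is not free among \emph{all} profinite $S$-semimodules). Thus the entire weight of the theorem rests on carefully invoking joint continuity of both the monoid operation and the scalar action of $N$ to extend the algebraically-forced morphism from the dense copy of $\S|X|$ to the whole space of measures.
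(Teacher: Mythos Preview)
Your outline is essentially the paper's argument: uniqueness via density of $\S|X|$ in $\M(X,S)$, existence by writing the target as an inverse limit of finite discrete strongly continuous $S$-semimodules and building compatible maps into each piece. Your factorisation through a finite quotient $Z$ of $X$ is an equivalent reformulation of the paper's pushforward $f_i^*\colon\M(X,S)\to\M(Y_i,S)$ followed by the ``evaluation'' map $h_{Y_i}(\nu)=\sum_{y}\nu(y)\cdot y$.

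However, you mislocate the decisive use of strong continuity. In Step~2 you assert that the algebraic map $\S Z\to N_j$ ``pushes forward to a continuous map out of $\M(Z,S)$'', but this is precisely the point requiring proof, and it is where the paper invests its real effort. For $Z$ finite one has $\M(Z,S)\cong S^{Z}$, and the map in question is $(s_z)_z\mapsto\sum_{z} s_z\cdot g(z)$; its continuity requires each partial map $s\mapsto s\cdot m$ to be continuous $S\to N_j$, i.e.\ continuity of $\alpha(-,m)$. In this paper a \emph{topological} $S$-semimodule is only assumed to have $\alpha(s,-)$ continuous, so separate continuity does not give this; joint continuity is what does the work. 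The paper makes this explicit by factoring $h_{Z}$ through $(S\times Z)^n\to Z^n\to Z$ and invoking strong continuity of $Z$. Your Step~3 density argument for the morphism property is correct and is a legitimate alternative to verifying directly that each $g_j$ is a homomorphism, but it presupposes that $\overline{f}$ is already continuous and therefore cannot substitute for the missing justification in Step~2; strong continuity is in fact needed in both places.
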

\begin{proof}
Let $\eta_X\colon X\to \M(X,S)$ be the continuous function sending $x$ to the measure $\mu_x$ concentrated in $x$, i.e.\ $\mu_x(b)=1$ if $x\in b$, and $\mu_x(b)=0$ otherwise. We will prove that $\M(X,S)$ satisfies the universal property with respect to the map $\eta_X$. That is, for every profinite strongly continuous $S$-semimodule $Y$ and continuous function $f\colon X\to Y$, there exists a unique continuous homomorphism of $S$-semimodules $g\colon \M(X,S)\to Y$ such that the following triangle commutes.
\begin{equation}\label{eq:extension-to-free-algebra}
\begin{tikzcd}
X \arrow{r}{\eta_X} \arrow{dr}[swap]{f} & \M(X,S) \arrow[dashed]{d}{g} \\
 & Y
\end{tikzcd}
\end{equation}
By Lemma \ref{l:functions-sit-densely} the function $\S|X|\to\M(X,S)$, mapping $f$ to $\int{f}$, is injective and has dense image. 
Observe that any measure on $X$ of the form $\int{f}$, for $f\in\S|X|$, is a finite linear combination with coefficients in $S$ of measures concentrated in a point.
Thus any two continuous homomorphisms making diagram \eqref{eq:extension-to-free-algebra} commute must coincide on the image of $\S|X|\to\M(X,S)$. 
Since the latter is dense in $\M(X,S)$, and $Y$ is Hausdorff, there is at most one solution to the diagram above.

To exhibit such a solution, we proceed as follows. Let $\{\pi_i\colon Y\to Y_i\mid i\in I\}$ be a cone of continuous homomorphisms defining $Y$ as the inverse limit of the finite and discrete strongly continuous $S$-semimodules $Y_i$, and set \[f_i=\pi_i\circ f\colon X\to Y_i.\] We will define a cone of continuous homomorphisms $\{g_i\colon \M(X,S)\to Y_i\mid i\in I\}$ such that the induced limit map $\M(X,S)\to Y$ provides the desired solution. For each $i\in I$ consider the square
\begin{equation*}
\begin{tikzcd}
X \arrow{r}{\eta_X} \arrow{d}[swap]{f} & \M(X,S) \arrow{d}{f_i^*} \\
Y_i & \M(Y_i,S) \arrow{l}[swap]{h_{Y_i}}
\end{tikzcd}
\end{equation*}
where $f_i^*\colon \M(X,S)\to\M(Y_i,S)$ sends a measure $\mu$ to its pushforward with respect to $f_i$, i.e.\ $f_i^*\mu(b)=\mu(f_i^{-1}(b))$ for every clopen $b$ of $Y_i$, and
\[
\forall \nu\in\M(Y_i,S), \ \ h_{Y_i}(\nu)= \sum_{y\in Y_i}{\nu(y)\cdot y}.
\]
Here, $\nu(y)$ stands for $\nu(\{y\})$, and the expression makes sense because $Y_i$ is discrete. It is not difficult to see that the pushforward maps $f_i^*$ are continuous homomorphisms of $S$-semimodules. Suppose for a moment that the $h_{Y_i}$'s are also continuous homomorphisms. Then, for each $i\in I$, $g_i=h_{Y_i}\circ f_i^*$ would be a continuous homomorphism satisfying $g_i\circ\eta_X=f_i$. Indeed, 
\[
\forall x\in X \ \ (g_i\circ\eta_X)(x)= \sum_{y\in Y_i}{f_i^*\mu_x(y)\cdot y}=f_i(x)
\]
because $f_i^*\mu_x$ is the measure on $Y_i$ concentrated in $f_i(x)$. If $g\colon\M(X,S)\to Y$ is the continuous homomorphism of $S$-semimodules induced by the cone $\{g_i\colon \M(X,S)\to Y_i\mid i\in I\}$, we have $g\circ \eta_X=f$. That is, $g$ is a solution to diagram \eqref{eq:extension-to-free-algebra}.
Hence, it remains to show that each $h_{Y_i}$ is a continuous homomorphism. To improve readability, we write $Z$ instead of $Y_i$, and assume that $Z=\{y_1,\ldots,y_n\}$. We only check that $h_Z$ is continuous, for the preservation of the algebraic structure is easily verified. 
Consider the composition
\[
\gamma\colon (S\times Z)^n\to Z^n\to Z
\]
where the first map sends $((\ell_1,z_1),\ldots,(\ell_n,z_n))$ to $(\ell_1\cdot z_1,\ldots,\ell_n\cdot z_n)$, and the second one sends $(z_1,\ldots,z_n)$ to $z_1+\cdots+z_n$. Since $Z$ is a strongly continuous $S$-semimodule, $\gamma$ is a continuous function. For any $z\in Z$, let $T_z$ be the closed subset of $S^n$ obtained by projecting the clopen set $\gamma^{-1}(z)\subseteq (S\times Z)^n$ onto the $S$-coordinates. Then, one has
\[
h_Z^{-1}(z)=\big\{\nu\in\M(Z,S)\mid \sum_{y\in Z}{\nu(y)\cdot y}=z\big\}=(\ev_{y_1}\times\cdots\times\ev_{y_n})^{-1}(T_z)
\]
for any $z\in Z$, where $\ev_{y_1}\times\cdots\times\ev_{y_n}\colon \M(Z,S)\to S^n$. The latter function is continuous, whence $h_Z^{-1}(z)$ is a closed subset of $\M(Z,S)$, showing that the function $h_Z$ is continuous. This concludes the proof.
\end{proof}
We conclude the section by showing that, in general, $\M(X,S)$ is not the free profinite $S$-semimodule on $X$. This is due to the fact that separate continuity of the scalar multiplication on an $S$-semimodule does not imply joint continuity. However, it clearly does if $S$ if finite, for then the two notions coincide. The latter case will be treated in the next section.
\begin{remark}\label{r:quotient-of-strongly-cont}
Let $X$ be any Boolean space. We claim that every profinite $S$-semimodule which is a continuous homomorphic image of $\M(X,S)$, is a strongly continuous $S$-semimodule. Note that this implies that $\M(X,S)$ cannot be the free profinite $S$-semimodule on $X$, for otherwise every profinite $S$-semimodule would be strongly continuous (and we know by Example \ref{ex:prof-semimod-not-strong} that this is not the case).

To settle the claim, let $A$ be a profinite $S$-semimodule and $f\colon \M(X,S)\twoheadrightarrow A$ a continuous surjective homomorphism. Write $\alpha\colon S\times \M(X,S)\to \M(X,S)$, $\beta \colon S\times A\to A$ for the scalar multiplications on $\M(X,S)$ and $A$, respectively. We have the following commutative square.
\[\begin{tikzcd}
S\times \M(X,S) \arrow{r}{\alpha} \arrow[twoheadrightarrow]{d}[swap]{\textrm{id}_S\times f} & \M(X,S) \arrow[twoheadrightarrow]{d}{f} \\
S\times A \arrow{r}{\beta} & A 
\end{tikzcd}\]
We must prove that $\beta$ is continuous. Note that $f$, and hence also $\textrm{id}_S\times f$, are topological quotients. Thus, for every open subset $U\subseteq A$, $\beta^{-1}(U)$ is open if, and only if, $\beta\circ(\textrm{id}_S\times f)^{-1}(U)$ is open in $S\times \M(X,S)$. In turn, the latter set is open because the diagram commutes and $f\circ \alpha$ is continuous.
\end{remark}

\section{The case of finite semirings: the main result}\label{s:finite-case}
We aim to give a concrete representation of the free profinite $S$-semimodule on a Boolean space $X$. In view of Proposition \ref{p:hat-profinite-algebra} the latter is isomorphic to $\p{\S}{X}$, where $\p{\S}$ is the profinite monad of the $\Set$-monad $\S$ associated with the semiring $S$. In Theorem \ref{t:free-on-idempotent-profinite-rig} we saw that, if $S$ is profinite, then the algebra $\M(X,S)$ of all the $S$-valued measures on $X$ is the free profinite \emph{strongly continuous} $S$-semimodule on $X$. 
Provided $S$ is finite, every topological $S$-semimodule is strongly continuous. Therefore, we obtain the following theorem as a corollary.
\begin{theorem}\label{t:main-S-finite}
Let $S$ be a finite semiring, and $X$ a Boolean space. Then $\p{\S}{X}$, the free profinite $S$-semimodule on $X$, is isomorphic to the algebra $\M(X,S)$ of all the $S$-valued measures on $X$.\qed
\end{theorem}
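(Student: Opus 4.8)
The plan is to obtain this statement directly as a corollary of Corollary \ref{cor:free-prof-semim} and Theorem \ref{t:free-on-idempotent-profinite-rig}, by observing that the strong-continuity hypothesis appearing in the latter becomes vacuous as soon as $S$ is finite. The whole argument is a matter of checking that both $\p{\S}{X}$ and $\M(X,S)$ are free objects on $X$ in one and the same category, and then invoking the uniqueness of free objects.

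First I would record the key point that, for a finite semiring $S$, every topological $S$-semimodule $M$ is automatically strongly continuous in the sense of Definition \ref{d:strongly-continuous}. Indeed, since $S$ is finite and discrete, the product space $S\times M$ is the finite coproduct $\coprod_{s\in S} M$, so a function $\alpha\colon S\times M\to M$ is continuous if and only if each restriction $\alpha(s,-)\colon M\to M$ is continuous. Thus separate and joint continuity of the scalar multiplication coincide, and strong continuity imposes no extra condition. Consequently the finite discrete strongly continuous $S$-semimodules are exactly the finite discrete $S$-semimodules, and therefore the notions of \emph{profinite $S$-semimodule} and \emph{profinite strongly continuous $S$-semimodule} agree: an inverse limit of finite discrete $S$-semimodules is the same thing as an inverse limit of finite discrete strongly continuous ones, with the same continuous $S$-semimodule homomorphisms as morphisms.

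Finally I would conclude by uniqueness of free objects. By Corollary \ref{cor:free-prof-semim} the object $\p{\S}{X}$ is free on $X$ in the category of profinite $S$-semimodules, while by Theorem \ref{t:free-on-idempotent-profinite-rig} the object $\M(X,S)$ is free on $X$ in the category of profinite strongly continuous $S$-semimodules; by the previous paragraph these two categories coincide. Since both universal properties are taken with respect to the unit sending a point $x$ to the measure concentrated at $x$, the two objects solve the same universal mapping problem over $\BStone$ and are therefore canonically isomorphic. There is no real obstacle here beyond making explicit that finiteness of $S$ collapses joint continuity into separate continuity; the genuine content has already been discharged in Theorem \ref{t:free-on-idempotent-profinite-rig}.
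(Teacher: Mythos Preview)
Your proposal is correct and is essentially the paper's own argument: the paper states, just before the theorem, that for finite $S$ every topological $S$-semimodule is strongly continuous, so Theorem~\ref{t:free-on-idempotent-profinite-rig} combined with Corollary~\ref{cor:free-prof-semim} yields the result immediately. The only minor imprecision is your remark that both units send $x$ to the measure concentrated at $x$---the unit of $\p{\S}X$ is $\w{\eta}_X$, not literally that map---but this is irrelevant since uniqueness of free objects already gives the canonical isomorphism.
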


In the remainder of the section we indicate how one could give a direct proof of Theorem \ref{t:main-S-finite}, exploiting the finiteness of the semiring. 
Throughout the section we assume $S=(S,+,\cdot,0,1)$ is a finite semiring. We first describe the dual algebra of $\p{\S}{X}$ in terms of the dual algebra of the Boolean space $X$. Recall from \eqref{eq:finite-supp-functions} the set $\S|X|$ of finitely supported $S$-valued functions on $X$, and the integration map $\S|X|\to \M(X,S)$, $f\mapsto \int{f}$ of \eqref{eq:integral-of-a-function}.
\begin{lemma}\label{l:characterisation-B-hat}
Let $X$ be a Boolean space with dual algebra $B$. The algebra $\p{B}$ dual to $\p{\S}{X}$ is isomorphic to the Boolean subalgebra of $\P(\S|X|)$ generated by the elements of the form
\[
[b,k]=\big\{f\in\S|X|\mid \int_b{f}=k\big\},
\]
for $b\in B$ and $k\in S$.
\end{lemma}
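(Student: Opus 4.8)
The plan is to use Stone duality to translate the statement about $\p{\S}{X}$ into a statement about the clopen algebra of $\M(X,S)$. By Theorem \ref{t:main-S-finite}, $\p{\S}{X}\cong\M(X,S)$, so the dual algebra $\p{B}$ is the Boolean algebra of clopens of $\M(X,S)$. Thus the goal reduces to proving that the clopen algebra of $\M(X,S)$ is generated, as a Boolean algebra, by the sets $[b,k]$ for $b\in B$ and $k\in S$. To connect these generators with the topology already described on $\M(X,S)$, I would first observe that, under the injection $\S|X|\hookrightarrow\M(X,S)$, $f\mapsto\int f$, of Lemma \ref{l:functions-sit-densely}, the set $[b,k]$ is the preimage of the subbasic open $\langle b,\{k\}\rangle=\{\mu\mid\mu(b)=k\}$ of \eqref{eq:basic-opens-measures}. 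Since $S$ is finite and discrete, every singleton $\{k\}\subseteq S$ is clopen, so each $\langle b,\{k\}\rangle$ is a clopen of $\M(X,S)$, and these are exactly the images of the $[b,k]$ under the identification.

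The first substantive step is therefore to show that the sets $\langle b,\{k\}\rangle$, for $b\in B$ and $k\in S$, generate the Boolean algebra of clopens of $\M(X,S)$. Here I would invoke the description of the topology: by \eqref{eq:basic-opens-measures} a subbasis is given by the $\langle b,U\rangle$ with $b\in B$ and $U\subseteq S$ clopen, and since $S$ is finite every clopen $U$ is a finite union of singletons, so $\langle b,U\rangle=\bigcup_{k\in U}\langle b,\{k\}\rangle$. Hence the $\langle b,\{k\}\rangle$ form a subbasis for the topology. A standard fact about Boolean spaces is that any subbasis of clopens generates the whole clopen algebra: finite intersections of subbasic opens form a basis, and every clopen, being compact open, is a finite union of basic opens, hence lies in the Boolean subalgebra generated by the subbasis. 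Applying this to our subbasis yields that the clopens of $\M(X,S)$ are exactly the Boolean combinations of the $\langle b,\{k\}\rangle$.

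It remains to match this Boolean subalgebra of clopens of $\M(X,S)$ with the Boolean subalgebra of $\P(\S|X|)$ generated by the $[b,k]$. This is where the density of $\S|X|$ in $\M(X,S)$ (Lemma \ref{l:functions-sit-densely}) does the essential work: restriction along the dense injection $\S|X|\to\M(X,S)$ induces a map from clopens of $\M(X,S)$ to subsets of $\S|X|$, and because the image is dense this restriction is injective on clopens (two distinct clopens of a Boolean space differ on a nonempty open set, which must meet the dense set). Moreover it sends $\langle b,\{k\}\rangle$ to $[b,k]$ and commutes with the Boolean operations, so it carries the subalgebra generated by the $\langle b,\{k\}\rangle$ isomorphically onto the subalgebra of $\P(\S|X|)$ generated by the $[b,k]$. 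Combining this with the previous step gives an isomorphism $\p{B}\cong\langle[b,k]\mid b\in B,\ k\in S\rangle$, as claimed.

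I expect the main obstacle to be the careful verification that restriction to the dense subset $\S|X|$ is injective on the clopen algebra, i.e. that no two distinct clopens of $\M(X,S)$ have the same trace on $\S|X|$; this is exactly where one must use both density and the fact that clopens are open. The remaining points — that singletons in $S$ are clopen, that a subbasis of clopens generates the clopen algebra of a Boolean space, and that restriction preserves Boolean operations — are routine, though the subbasis-generation argument should be stated explicitly since it is the structural heart of the identification.
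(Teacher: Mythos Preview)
Your argument is correct, but it follows a different route from the paper. The paper proves Lemma~\ref{l:characterisation-B-hat} directly from the inverse-limit formula $\p{\S}X=\lim_{X\epifin Y}\S Y$ (equation~\eqref{eq:limit-formula-locally-finite}, valid since $S$ is finite), without appealing to Theorem~\ref{t:main-S-finite}: it observes that $\p{B}$ consists precisely of the sets $p_i^{-1}(U)$ coming from the limit cone, restricts along the dense injection $\tau_X$ of Lemma~\ref{l:dense-image}, and then computes each $(\S\pi_i)^{-1}(U)$ explicitly as a Boolean combination of sets $[b,k]$ (and checks conversely that each $[b,k]$ arises this way, via the characteristic map $\chi_b$). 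You instead invoke Theorem~\ref{t:main-S-finite} to pass to $\M(X,S)$, read off the clopen subbasis $\langle b,\{k\}\rangle$ from the product-topology description~\eqref{eq:basic-opens-measures}, and restrict along the dense integration map of Lemma~\ref{l:functions-sit-densely}. Both proofs rest on the same density-plus-restriction mechanism, and yours is shorter and more transparent. The trade-off is structural: in the paper, Lemma~\ref{l:characterisation-B-hat} is expressly presented as the first step of an \emph{alternative direct} proof of Theorem~\ref{t:main-S-finite}, so invoking that theorem here would make that alternative proof circular. As a standalone proof of the lemma (with Theorem~\ref{t:main-S-finite} already established via Theorem~\ref{t:free-on-idempotent-profinite-rig}), your approach is legitimate and arguably cleaner.
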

\begin{proof}
Let $X$ be a Boolean space. Then $X$ is the limit of the codirected diagram $\{X_i\mid i\in I\}$ of its finite continuous images. Write $\pi_i\colon X\to X_i$ for the $i$-th limit map. Since $S$ is finite, by equation \eqref{eq:limit-formula-locally-finite} the Boolean space $\p{\S}X$ is homeomorphic to the inverse limit of the finite discrete spaces $\S{X_i}$. Let $p_i\colon \p{\S}X\to \S{X_i}$ be the $i$-th limit map. As observed in Section \ref{s:codensity-profinite-monads}, under these hypotheses the `comparison map' $\t_X\colon \S|X|\to|\p{\S}{X}|$ from Definition \ref{d:nt-tau} is injective and satisfies
\begin{equation}\label{eq:restriction}
\S{\pi_i}=p_i\circ\t_X
\end{equation}
 for each $i\in I$. To improve notation, we identify $\S|X|$ with its image under $\t_X$.
The dual algebra $\p{B}$ of $\p{\S}X$ is generated by the set 
\[
\{p_i^{-1}(U)\mid i\in I, \ U\in\P(\S{X_i})\}.
\]
Note that the set above actually coincides with $\p{B}$, because it is already closed under the Boolean operations. 
Indeed, it is clearly closed under taking complements, and it is closed under finite intersections because the diagram $\{X_i\mid i\in I\}$ is codirected.
Now, consider the Boolean algebra homomorphism $\gamma\colon\p{B}\to \P(\S|X|)$ sending a clopen of $\p{\S}X$ to its restriction to the subset $\S|X|$. In view of equation \eqref{eq:restriction}, this map can be equivalently described as
\[
\gamma\colon \p{B}\to \P(\S|X|), \ p_i^{-1}(U)\mapsto (\S{\pi_i})^{-1}(U)
\]
and it is injective precisely because $\t_X$ has dense image by Lemma \ref{l:dense-image}. Therefore, $\p{B}$ is isomorphic to the Boolean subalgebra $\gamma(\p{B})$ of $\P(\S|X|)$. In turn, for every $(\S{\pi_i})^{-1}(U)\in \gamma(\p{B})$ we have
\begin{align*}
(\S{\pi_i})^{-1}(U)&=\bigcup_{f\in U}{\big\{g\in \S|X|\mid \S{\pi_i}(g)=f \big\}} \\
&=\bigcup_{f\in U}{\bigcap_{x\in X_i}{\big\{g\in \S|X|\mid \int_{\pi_i^{-1}(x)}{g=f(x)}\big\}}} \\
&=\bigcup_{f\in U}{\bigcap_{x\in X_i}{[\pi_i^{-1}(x),f(x)]}}.
\end{align*}
To conclude that $\p{B}$ is isomorphic to the subalgebra of $\P(\S|X|)$ generated by the elements of the form $[b,k]$, where $b$ ranges over the clopens of $X$ and $k\in S$, it suffices to show that each $[b,k]$ belongs to $\gamma(\p{B})$. Assume without loss of generality that $b\neq \emptyset, X$ and consider its characteristic map $\chi_b\colon X\to\2$. Then there is $i_b\in I$ such that $\pi_{i_b}=\chi_b$. It follows that $[b,k]=(\S{\pi_{i_b}})^{-1}(U)\in \gamma(\p{B})$, where $U=\{f\colon \2\to S\mid f(1)=k\}$.
\end{proof}
Now, let $\phi$ be a point of $\p{\S}{X}$, i.e.\ an ultrafilter on the Boolean algebra $\p{B}$. By the previous lemma, for each $b\in B$, $\{[b,k]\mid k\in S\}$ is a finite set of pairwise disjoint elements of $\p{B}$ whose join is the top element. Thus we can define a function
\begin{equation}\label{eq:morphisms-measures}
\p{\S}{X}\to \M(X,S), \ \phi\mapsto \mu_{\phi}
\end{equation}
where, for each $b\in B$, we define $\mu_{\phi}(b)$ to be the unique $k\in S$ satisfying $[b,k]\in \phi$. It is not difficult to see that each $\mu_{\phi}$ is, indeed, a measure. This correspondence is injective because the elements of the form $[b,k]$ generate the Boolean algebra $\p{B}$ (see Lemma \ref{l:characterisation-B-hat}). 

On the other hand, let $\mu\colon B\to S$ be a measure on $X$. We will exhibit an ultrafilter $\phi$ on $\p{B}$ such that $\mu=\mu_{\phi}$. Consider the set $F=\{[b,\mu(b)]\mid b\in B\}\subseteq \P(\S|X|)$. Observe that $[b,k]=\emptyset$ if, and only if, $b=0$ and $k\neq 0$. Hence, the empty set does not belong to $F$ because $\mu(0)=0$. Moreover, for every $b_1,\ldots,b_n\in B$,
\[
[b_1,\mu(b_1)]\cap \cdots \cap [b_n,\mu(b_n)]\neq \emptyset
\]
by additivity of $\mu$, i.e.\ $F$ is a filter base. Let $\phi$ be the (proper) filter generated by $F$. It is enough to prove that $\phi$ is an ultrafilter, for then $\mu=\mu_{\phi}$. Since the $[b,k]$'s generate $\p{B}$, it suffices to show that $[b,k]\notin \phi$ implies $[b,k]^c\in \phi$. Assume $[b,k]\notin \phi$. Then $k\neq\mu(b)$ entails $[b,\mu(b)]\subseteq [b,k]^c$, whence $[b,k]^c\in \phi$.

This shows that the map in \eqref{eq:morphisms-measures} is a bijection. One can check that it is also a continuous homomorphism of $S$-semimodules, so that we recover the result in Theorem \ref{t:main-S-finite}.
\begin{theorem}\label{t:measures-representation-concrete}
Let $S$ be a finite semiring, and $X$ a Boolean space. The map in \eqref{eq:morphisms-measures} yields a continuous isomorphism of $S$-semimodules between $\p{\S}{X}$ and $\M(X,S)$. Therefore, the free profinite $S$-semimodule over $X$ is isomorphic to the algebra $\M(X,S)$ of all the $S$-valued measures on $X$.\qed
\end{theorem}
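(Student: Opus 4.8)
The plan is to observe that the discussion preceding the statement already establishes that \eqref{eq:morphisms-measures} is a bijection, so that only continuity and compatibility with the $S$-semimodule structure remain to be verified. First I would check continuity. Because $S$ is finite, every clopen $U\subseteq S$ is a finite union of singletons, hence the sets $\langle b,\{k\}\rangle$ with $b\in B$ and $k\in S$ form a subbasis for the topology of $\M(X,S)$. The preimage of $\langle b,\{k\}\rangle$ under \eqref{eq:morphisms-measures} is precisely $\{\phi\mid [b,k]\in\phi\}=\widehat{[b,k]}$, the basic clopen of $\p{\S}X$ corresponding under Stone duality to $[b,k]\in\p{B}$. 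Thus preimages of subbasic opens are clopen, and the map is continuous; being a continuous bijection between compact Hausdorff spaces (Lemma \ref{l:measures-are-boolean}), it is automatically a homeomorphism.

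To promote this homeomorphism to an isomorphism of $S$-semimodules I would use that $\S|X|$ sits densely in both spaces: in $\p{\S}X$ via the injective comparison map $\t_X$, which is an $S$-semimodule homomorphism (Lemma \ref{l:T-alg-structure-tau-morphism}) with dense image (Lemma \ref{l:dense-image}), and in $\M(X,S)$ via the integration map $f\mapsto\int f$, which is injective with dense image (Lemma \ref{l:functions-sit-densely}) and, by a direct computation on clopens, an $S$-semimodule homomorphism. The crucial identity is that \eqref{eq:morphisms-measures} carries $\t_X(f)$ to $\int f$ for every $f\in\S|X|$: under the identification of the point $\t_X(f)$ with the ultrafilter $\{C\in\p{B}\mid f\in C\}$ one has $[b,k]\in\t_X(f)$ if and only if $\int_b f=k$, so that $\mu_{\t_X(f)}=\int f$. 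Modulo the two dense embeddings, the map is therefore the identity on $\S|X|$, and in particular it preserves $0$, $+$ and scalar multiplication on the dense subset $\t_X(\S|X|)$.

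Finally I would extend the homomorphism property to all of $\p{\S}X$ by continuity and density. Writing $g$ for \eqref{eq:morphisms-measures}, the two maps $\p{\S}X\times\p{\S}X\to\M(X,S)$ sending $(\phi,\psi)$ to $g(\phi+\psi)$ and to $g(\phi)+g(\psi)$ agree on the dense subset $\t_X(\S|X|)\times\t_X(\S|X|)$; both maps are continuous (addition is continuous on $\M(X,S)$ by Lemma \ref{l:measures-strongly-continuous}, and on $\p{\S}X$ because it is a profinite, hence topological, $S$-semimodule), and $\M(X,S)$ is Hausdorff, so they agree everywhere. The same argument applied to $S\times\p{\S}X$ handles scalar multiplication, using that $S$ is finite and discrete. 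Hence $g$ is a continuous bijective homomorphism of $S$-semimodules; its inverse is then automatically a homomorphism, and $g$ is an isomorphism of topological $S$-semimodules. Combined with Corollary \ref{cor:free-prof-semim}, which identifies $\p{\S}X$ as the free profinite $S$-semimodule on $X$, this yields the asserted representation.

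The step I expect to be most delicate is the bookkeeping in the second paragraph: one must keep the two distinct Stone-duality identifications of $\S|X|$ --- as a dense subspace of $\p{\S}X$ and as a dense subspace of $\M(X,S)$ --- rigorously apart, and confirm that $g$ intertwines them as the identity, since it is exactly this compatibility that transports the algebraic structure across the homeomorphism.
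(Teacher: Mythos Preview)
Your proposal is correct and follows the same line as the paper. The paper's own argument is the discussion immediately preceding the theorem (which is marked \qed\ with no separate proof): it establishes bijectivity in detail and then simply asserts ``One can check that it is also a continuous homomorphism of $S$-semimodules''; your proof supplies precisely these omitted verifications, and the density-and-continuity technique you use (intertwining $\t_X$ with the integration map and extending from $\S|X|$) is the natural way to fill in that step.
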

Upon identifying an element of $\p{\S}{X}$ with the corresponding measure on $X$, the `comparison map' $\t_X\colon \S|X|\to |\p{\S}{X}|$ of Definition \ref{d:nt-tau} can be concretely described as the integration function
\[
\t_X\colon\S|X|\to \M(X,S), \ f\mapsto \int{f}.
\]
The latter map is an embedding with dense image and, for each $b\in B$ and $k\in S$, the closure of the subset 
\[
[b,k]=\big\{f\in\S|X|\mid \int_b{f}=k\big\}
\] 
of $\M(X,S)$ is the subbasic clopen subset \[\langle b,k\rangle=\{\mu\in\M(X,S)\mid \mu(b)=k\}.\]
Moreover, for any continuous map $h\colon X\to Y$ and measure $\mu\in\M(X,S)$, the continuous homomorphism $\p{\S}{h}\colon \M(X,S)\to\M(Y,S)$ sends a measure $\mu$ on $X$ to its pushforward with respect to $h$. That is, \[\p{\S}{h}(\mu)\colon b\mapsto \mu(h^{-1}(b))\] for every clopen $b$ of $Y$.
Further, recall from \eqref{eq:Stone-Cech-functor} the adjunction $|-|\colon\BStone\leftrightarrows\Set\cocolon \beta$. Since adjoints compose, the free profinite $S$-semimodule on a set $A$ is isomorphic to $\M(\beta A,S)$, where $\beta A$ is the Stone-{\v C}ech compactification of the discrete space $A$. Note that an element of $\M(\beta A,S)$ is a finitely additive function $\P(A)\to S$, i.e.\ the measurable subsets of $\beta A$ are in bijection with the subsets of $A$.
\begin{remark}
Theorem \ref{t:main-S-finite} yields, in the case of the two-element distributive lattice $\2$, a representation of the Vietoris space $\V{X}$ of a Boolean space $X$ as the space of $\2$-valued measures over $X$. This may be compared with the representations by Shapiro \cite{Shapiro1992} and Radul \cite{Radul1997} of $\V{X}$, for $X$ a compact Hausdorff space, in terms of real-valued functionals.
\end{remark}
\section{The case of profinite idempotent semirings: algebras of continuous functions}\label{s:idempotent-case}
In this final section we show that, if $S$ is a profinite idempotent semiring, then all the $S$-valued measures are uniquely given by continuous density functions (Theorem \ref{t:continuous-function-representation-profinite-idempotent}). By Theorem \ref{t:main-S-finite}, this yields a representation of the free profinite $S$-semimodule on a Boolean space $X$ in terms of continuous $S$-valued functions on $X$, provided $S$ is a finite idempotent semiring.

Suppose $(S,+,\cdot,0,1)$ is a profinite semiring which is \emph{idempotent}, i.e.\ it satisfies $s+s=s$ for every $s\in S$.
Any idempotent semiring is equipped with a natural partial order $\leq$ defined by $s\leq t$ if, and only if, there is $u$ such that $s+u=t$. The operation $+$ is then a join-semilattice operation with identity $0$. Accordingly, we write $\vee$ instead of $+$. 
In particular, a profinite idempotent semiring is a topological join-semilattice on a Boolean space.\footnote{Although we shall not need this fact, we remark that the topological semilattices whose underlying spaces are Boolean, are precisely the profinite semilattices \cite{Numakura1957}.}
Next we recall some basic facts about such topological semilattices that we will use in the following. We warn the reader that, while we work with join-semilattices, most of the literature (cf.\ \cite{Compendium2003,HMS74,Johnstone1986}) deals with meet-semilattices. 
\begin{definition}
An element $k$ in a complete lattice $L$ is \emph{compact} if, for every subset $S\subseteq L$ such that $k\leq \bigvee S$, there is a a finite subset $F\subseteq S$ with $k\leq \bigvee F$. An \emph{algebraic lattice} is a complete lattice in which every element is the supremum of the compact elements below it.
\end{definition}
Let $L$ be a \emph{directed complete poset} (\emph{dcpo}, for short). That is, $L$ is a poset in which every directed subset admits a supremum. A subset $U\subseteq L$ is called \emph{Scott open} if it is upward closed and, for every directed subset $D\subseteq L$,
\[
\bigvee D\in U \ \Rightarrow \ D\cap U\neq\emptyset.
\] 
The collection of all Scott open subsets is a topology, the \emph{Scott topology} of $L$. Further, the \emph{lower topology} on $L$ is the topology generated by the sets of the form $(\up{x})^c$ for $x\in L$.
\begin{definition}
The \emph{Lawson topology} of a dcpo $L$ is the smallest topology containing both the Scott topology and the lower topology.
\end{definition}
The following theorem identifies the topology of a topological meet-semilattice on a Boolean space as the Lawson topology. For a proof see, e.g., \cite[Theorem VI-3.13]{Compendium2003}.
\begin{theorem}
Let $L$ be a topological meet-semilattice with $1$ whose underlying space is Boolean. Then $L$ is an algebraic lattice and its topology is the Lawson topology.\qed
\end{theorem}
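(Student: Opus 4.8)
The plan is to prove that a topological meet-semilattice $L$ with $1$ on a Boolean space is an algebraic lattice whose topology coincides with the Lawson topology, by establishing three things in turn: completeness of $L$ as a lattice, algebraicity, and the identification of the topology. First I would exploit the compactness of the underlying Boolean space together with continuity of the meet operation to produce arbitrary infima. Given a subset $A \subseteq L$, the family of principal downsets $\{\down{a} \mid a \in A\}$ (where $\down{a}=\{x \mid x \leq a\}$) consists of closed sets, since each $\down{a}$ is the equaliser of the continuous maps $x \mapsto x \wedge a$ and $x \mapsto x$ into the Hausdorff space $L$; compactness then lets me take finite meets to see that the filtered intersection $\bigcap_{a\in A}\down{a}$ is nonempty and closed, and continuity of $\wedge$ together with the presence of $1$ as a top element gives this intersection a greatest element, which is the infimum $\bigwedge A$. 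Since all infima exist and $1$ is the top, all suprema exist as well, so $L$ is a complete lattice.

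Next I would address algebraicity, which is where the real content lies. The goal is to show every element is the join of the compact elements below it. The key is to use the clopen basis of the Boolean space: given $x \in L$ and an open neighbourhood, I want to find a compact element $k \leq x$ that is still ``close'' to $x$. I would argue that the Scott-topology and lower-topology behaviour of $\wedge$ forces the downset generated by compact elements to be dense below each point, using that clopen sets are simultaneously closed (hence Scott-closed-compatible) and open. Concretely, I expect to identify compact elements with those $k$ for which $\up{k}$ is open (equivalently clopen), and then use the fact that in a Boolean space the clopens separate points to recover each $x$ as the supremum of such $k$. The main obstacle will be precisely this step: connecting the purely order-theoretic notion of compactness (finite subcovers of directed joins) with the topological clopen structure, i.e.\ showing that ``enough'' compact elements exist and that their join recovers $x$. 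This is the technical heart of the theorem and is exactly where the Boolean (as opposed to merely compact Hausdorff) hypothesis is essential.

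Finally, for the identification of the topology I would argue by comparing two Boolean-space topologies on the same set, using that a continuous bijection between compact Hausdorff spaces is a homeomorphism. I would first check that the given topology on $L$ refines the Lawson topology: Scott-open sets are open because, by algebraicity, they are unions of the clopen sets $\up{k}$ for $k$ compact, and lower-topology subbasic opens $(\up{x})^c$ are open since $\up{x}=\d{x}$-style arguments show $\up{x}$ is closed (it is the preimage of $\{x\}$ under $y \mapsto x \wedge y$, up to the appropriate reformulation). Conversely, the Lawson topology is compact (being generated on an algebraic lattice in the standard way) and Hausdorff, so the identity map from $L$ with its given topology to $L$ with the Lawson topology is a continuous bijection between compact Hausdorff spaces, hence a homeomorphism. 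The bulk of the subtlety, as noted, sits in the algebraicity argument; once that is in hand, both the lattice-completeness and the topological identification follow by fairly standard compactness and separation arguments. Since the excerpt states this theorem with a pointer to \cite[Theorem VI-3.13]{Compendium2003}, I would in practice cite that source for the detailed verification of the algebraicity step while sketching the specialisation to the Boolean case as above.
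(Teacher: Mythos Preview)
The paper does not prove this theorem: it states the result, cites \cite[Theorem VI-3.13]{Compendium2003} for a proof, and closes with a \qed. So there is nothing to compare your sketch against in the paper itself; your final remark---that in practice you would cite the Compendium---is exactly what the paper does, and that alone matches the paper's treatment.

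Your sketch goes well beyond this and is broadly on the right track, but a couple of points deserve tightening if you want it to stand on its own. In the completeness step, the claim that the closed set $\bigcap_{a\in A}\down{a}$ has a greatest element does not follow just from ``continuity of $\wedge$ together with the presence of $1$''; the clean argument is rather that the net of finite meets of $A$ is monotone decreasing, has a cluster point by compactness, and that this cluster point is the infimum (using continuity of $\wedge$ and Hausdorffness). In the algebraicity step you correctly locate the crux---identifying compact elements with those $k$ for which $\up{k}$ is clopen and using the clopen basis to approximate arbitrary elements---but the passage from ``clopens separate points'' to ``every $x$ is a directed join of compact elements below it'' needs the intermediate observation that the clopen up-sets form a basis for the open up-sets, which is where total disconnectedness is genuinely used. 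Your topology-comparison argument via a continuous bijection of compact Hausdorff spaces is fine once algebraicity is in hand.
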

In the case of the profinite idempotent semiring $S$, the previous theorem entails that $S$ is a complete lattice in which every element is the infimum of the \emph{co-compact} elements above it (the concept of co-compact element is the order-dual of that of compact element). Thus the topology of $S$, the \emph{dual Lawson topology} (i.e., the Lawson topology of the order-dual of $S$), has as basic opens the sets of the form 
\begin{align}\label{eq:base-prof-semil}
\down{k}\cap (\down{l_1})^c\cap\cdots\cap(\down{l_n})^c,
\end{align}
where $k,l_1,\ldots,l_n$ are co-compact elements of $S$. 
Every set of the form $\down{k}$, with $k$ co-compact, is clopen \cite[Theorem II.3.3]{HMS74}; this shows that the sets in \eqref{eq:base-prof-semil} provide a basis of clopens for $S$. Finally, any directed subset of $S$ considered as a net converges to a unique limit, namely its least upper bound. Similarly, for codirected subsets and greatest lower bounds (see, e.g., \cite[{}II.1]{HMS74}). 

In view of the completeness of $S$, for each measure $\mu\in\M(X,S)$ we can define a function 
\begin{equation*}
\delta_{\mu}\colon X\to S, \ x\mapsto \bigwedge_{x\in b}{\mu(b)}
\end{equation*}
that intuitively provides the value of the measure $\mu$ at a point. 
In general, the functions $\delta_{\mu}\colon X\to S$ are not continuous with respect to the dual Lawson topology of $S$. However, they are continuous with respect to the \emph{dual Scott topology}, i.e.\ the Scott topology of the order-dual of $S$. The latter coincides with the topology of all those open sets (in the dual Lawson topology) which are downward closed, cf.\ \cite[Proposition III-1.6]{Compendium2003}. 
\begin{definition}\label{d:open-down-sets-topology}
Let $S$ be a profinite idempotent semiring. We define $\d{S}$ to be the topological space obtained by equipping the underlying set of $S$ with the dual Scott topology.
\end{definition}
\begin{lemma}\label{l:densities-are-continuous}
Let $S$ be a profinite idempotent semiring. For every Boolean space $X$ and measure $\mu\in\M(X,S)$, $\delta_{\mu}\colon X\to \d{S}$ is a continuous function. 
\end{lemma}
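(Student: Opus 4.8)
The plan is to reduce the continuity of $\delta_\mu$ to a pointwise statement by unwinding the definition of the dual Scott topology. Recall that a subset $U \subseteq S$ is open in $\d{S}$ precisely when it is a down-set (downward closed) with the property that, for every codirected family $D \subseteq S$ whose infimum $\bigwedge D$ lies in $U$, the family $D$ already meets $U$; this is just the Scott-openness condition read off in the order-dual of $S$. The first preparatory observation I would record is that $\mu$ is monotone. Indeed, since $S$ is idempotent we have $+ = \vee$, so for clopens $a \leq b$ one writes $b = a \vee (b \wedge \neg a)$ with $a \wedge (b \wedge \neg a) = 0$, whence
\[
\mu(b) = \mu(a) + \mu(b \wedge \neg a) = \mu(a) \vee \mu(b \wedge \neg a) \geq \mu(a).
\]

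With this in hand the core argument is short. Fix a dual Scott-open set $U$ and a point $x \in \delta_\mu^{-1}(U)$, so that $\delta_\mu(x) = \bigwedge_{x \in b} \mu(b) \in U$. I would first note that the family $\{\mu(b) \mid x \in b\}$ is codirected: given two clopens $b_1, b_2$ containing $x$, the clopen $b_1 \cap b_2$ also contains $x$ and, by monotonicity, $\mu(b_1 \cap b_2) \leq \mu(b_1), \mu(b_2)$, so it furnishes a lower bound of $\mu(b_1), \mu(b_2)$ inside the family. Since $\delta_\mu(x)$ is the infimum of this codirected family and lies in $U$, the defining property of the dual Scott topology yields a clopen $b_0$ with $x \in b_0$ and $\mu(b_0) \in U$. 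I then claim $b_0 \subseteq \delta_\mu^{-1}(U)$: for any $x' \in b_0$ the clopen $b_0$ is one of those containing $x'$, so $\delta_\mu(x') = \bigwedge_{x' \in b} \mu(b) \leq \mu(b_0)$, and as $U$ is a down-set containing $\mu(b_0)$ we conclude $\delta_\mu(x') \in U$. Thus $b_0$ is a clopen (in particular open) neighbourhood of $x$ contained in $\delta_\mu^{-1}(U)$; since $x$ was arbitrary, $\delta_\mu^{-1}(U)$ is open and $\delta_\mu$ is continuous.

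I do not expect a genuine obstacle here; the whole content is packaged into the two structural facts that make the argument go through, namely the monotonicity of $\mu$ (which rests on idempotency of $S$) and the resulting codirectedness of $\{\mu(b) \mid x \in b\}$, which is exactly what turns $\delta_\mu(x)$ into a codirected infimum to which the dual Scott-openness condition can be applied. The only point demanding a little care is to invoke the \emph{dual} Scott topology correctly: one must verify that the relevant infimum over clopens corresponds, in the order-dual $S^{\mathrm{op}}$, to a directed supremum lying in a Scott-open set, so that the meeting condition applies. Everything else is routine bookkeeping.
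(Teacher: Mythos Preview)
Your proof is correct and follows essentially the same approach as the paper: both use codirectedness of $\{\mu(b)\mid x\in b\}$ together with the definition of the dual Scott topology to produce a clopen $b_0\ni x$ with $\mu(b_0)\in U$, and then invoke downward closure of $U$ to conclude $b_0\subseteq\delta_\mu^{-1}(U)$. The paper phrases this as the global equality $\delta_\mu^{-1}(U)=\bigcup\{b\in B\mid \mu(b)\in U\}$ rather than pointwise, and leaves the monotonicity of $\mu$ implicit, but the argument is the same.
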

\begin{proof}
Let $\mu$ be a measure on $X$, and $U$ an open down-set of $S$. We must prove that the preimage
\[
\delta_{\mu}^{-1}(U)=\big\{x\in X\mid \bigwedge_{x\in b}{\mu(b)}\in U\big\}
\]
is open.
Note that the set $\{\mu(b)\mid x\in b\}$ is codirected. If its infimum belongs to $U$, which is dual Scott open, there must exist $b\in B$ containing $x$ and satisfying $\mu(b)\in U$. Thus
\[
\delta_{\mu}^{-1}(U)\subseteq\bigcup{\{b\in B\mid \mu\in \langle b,U\rangle\}}.
\]
The converse inclusion holds because $U$ is a down-set. This shows that $\delta_{\mu}^{-1}(U)$ is open in $X$. 
\end{proof}
Let $\C(X,\d{S})$ denote the set of all the $S$-valued functions on $X$ which are continuous with respect to the dual Scott topology of $S$. This can be regarded as a semilattice, with respect to the pointwise order. Similarly for $\M(X,S)$. In view of the previous lemma, there is a function 
\begin{align}\label{eq:density-g1}
\delta\colon\M(X,S)\to\C(X,\d{S}), \ \mu\mapsto \delta_{\mu}
\end{align}
which is readily seen to be monotone. 
In the converse direction, since $S$ is complete, for every function $f\colon X\to S$ and clopen $b$ of $X$ we can define the integral of $f$ over $b$ as 
\begin{align*}
\int_b{f}=\bigvee_{x\in b}{f(x)}.
\end{align*}
This notion of integration with values in an idempotent semiring is well-known, and it is studied in particular in idempotent analysis (see, e.g., \cite{KM1997}). So we have the integration map 
\begin{align}\label{eq:int-g2}
\int\colon \C(X,\d{S})\to\M(X,S), \ \ f\mapsto \big(b\mapsto \int_{b}{f}\big)
\end{align}
which is also monotone.
\begin{proposition}\label{p:Galois-connection}
Let $S$ be a profinite idempotent semiring. The maps 
\begin{equation*}
\delta\colon \M(X,S)\rightleftarrows \C(X,\d{S})\cocolon \int
\end{equation*}
defined in \eqref{eq:density-g1} and \eqref{eq:int-g2} form an adjoint pair, where $\delta$ is upper adjoint and $\int$ is lower adjoint. 
\end{proposition}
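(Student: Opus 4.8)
The plan is to verify the defining inequality of a monotone Galois connection directly. Both maps are already known to be monotone; moreover $\delta_{\mu}$ lands in $\C(X,\d{S})$ by Lemma~\ref{l:densities-are-continuous}, and $\int f$ is a genuine measure because suprema in the idempotent (hence complete) lattice $S$ are additive over disjoint clopens, the empty join being $0$. It therefore suffices to prove
\[
\int f \leq \mu \quad\Longleftrightarrow\quad f \leq \delta_{\mu}
\]
for all $f\in\C(X,\d{S})$ and $\mu\in\M(X,S)$, the orders on both sides being computed pointwise. This equivalence is precisely the assertion that $\int$ is the lower adjoint and $\delta$ the upper adjoint.

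For the forward implication I would assume $\int f\leq\mu$ and fix a point $x\in X$: for every clopen $b$ with $x\in b$ one has $f(x)\leq\bigvee_{y\in b}f(y)=\int_{b}f\leq\mu(b)$, so taking the infimum over all such $b$ gives $f(x)\leq\bigwedge_{x\in b}\mu(b)=\delta_{\mu}(x)$, i.e.\ $f\leq\delta_{\mu}$. For the converse I would assume $f\leq\delta_{\mu}$ and fix a clopen $b$: for each $x\in b$ we have $f(x)\leq\delta_{\mu}(x)=\bigwedge_{x\in b'}\mu(b')\leq\mu(b)$, taking $b'=b$ in the meet; hence $\mu(b)$ is an upper bound of $\{f(x)\mid x\in b\}$, and so $\int_{b}f=\bigvee_{x\in b}f(x)\leq\mu(b)$. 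As $b$ was arbitrary, $\int f\leq\mu$.

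Both implications are one-line order-theoretic computations, so I do not expect any serious obstacle; the only point requiring care is that these manipulations with arbitrary meets and joins are legitimate, which holds precisely because $S$ is a complete lattice. As an alternative, one could instead establish the unit and counit inequalities $f\leq\delta_{\int f}$ and $\int\delta_{\mu}\leq\mu$ separately and then invoke monotonicity of the two maps to recover the adjunction, but the symmetric direct verification above seems cleanest and entirely self-contained.
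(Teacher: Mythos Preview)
Your proof is correct and follows exactly the same approach as the paper: both verify the adjunction condition $\int f \leq \mu \Leftrightarrow f \leq \delta_{\mu}$ directly from the definitions of $\int$ and $\delta$. The paper merely states that this equivalence ``follows at once from the definitions,'' whereas you have spelled out the two one-line computations explicitly.
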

\begin{proof}
We must prove that, for any $\mu\in\M(X,S)$ and $f\in\C(X,\d{S})$, we have $\int f\leq \mu \Leftrightarrow f\leq \delta_{\mu}$. In turn, this  follows at once from the definitions of $\int{f}$ and $\delta_{\mu}$.
\end{proof}
The set $\C(X,\d{S})$ of continuous $\d{S}$-valued functions on $X$ carries a natural structure of $S$-semimodule, where both the monoid operation and the scalar multiplication are defined pointwise. With respect to this structure, the functions $\delta\colon \M(X,S)\rightleftarrows \C(X,\d{S})\cocolon \int$ are seen to be homomorphisms of $S$-semimodules. Moreover, they are continuous if the set $\C(X,\d{S})$ is equipped with the topology generated by the sets of the form
\[
\big\{f\in\C(X,S^{\downarrow})\mid \int_{b}{f}\in U\big\},
\]
for $b$ a clopen of $X$ and $U$ an open subset of $S$.
We will see that, in fact, the adjoint pair in Proposition \ref{p:Galois-connection} provides an isomorphism of topological algebras between $\M(X,S)$ and $\C(X,\d{S})$.
We first show that $\delta_{\mu}$ can be regarded as the \emph{density function} of the measure $\mu$. 
\begin{lemma}\label{l:density-functions}
Let $S$ be a profinite idempotent semiring, and $X$ a Boolean space with dual algebra $B$. For every $\mu\in\M(X,S)$ and $b\in B$, $\mu(b)=\int_{b}{\delta_{\mu}}$.
\end{lemma}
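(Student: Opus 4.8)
The plan is to prove the two inequalities $\int_b\delta_\mu\leq\mu(b)$ and $\mu(b)\leq\int_b\delta_\mu$ separately. The first is immediate; the second carries all the content and rests on a compactness argument adapted to the topology of $S$.

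For $\int_b\delta_\mu\leq\mu(b)$ I would simply invoke the adjunction of Proposition \ref{p:Galois-connection}. Since $\delta_\mu\leq\delta_\mu$ holds trivially, the defining equivalence $\int f\leq\mu\Leftrightarrow f\leq\delta_\mu$ applied to $f=\delta_\mu$ gives $\int\delta_\mu\leq\mu$ in the semilattice $\M(X,S)$, which evaluated at $b$ is exactly $\int_b\delta_\mu\leq\mu(b)$. (Alternatively, one notes directly that $\delta_\mu(x)=\bigwedge_{x\in c}\mu(c)\leq\mu(b)$ for each $x\in b$, taking $c=b$, and then takes the join over $x\in b$.)

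The substance is the reverse inequality $\mu(b)\leq\int_b\delta_\mu=\bigvee_{x\in b}\delta_\mu(x)$. Writing $t=\bigvee_{x\in b}\delta_\mu(x)$, and using that every element of $S$ is the infimum of the co-compact elements above it, it suffices to show $\mu(b)\leq k$ for every co-compact $k$ with $t\leq k$; the conclusion then follows by taking the infimum over all such $k$. So I would fix a co-compact $k\geq t$, i.e.\ one with $\delta_\mu(x)\leq k$ for all $x\in b$. The key point is that $\down{k}$ is a clopen down-set, hence dual Scott open. For each $x\in b$ the family $\{\mu(c)\mid x\in c\in B\}$ is codirected (from $\mu(c_1\wedge c_2)\leq\mu(c_1),\mu(c_2)$, which follows from finite additivity since $x\leq x+y$ in the natural order) and its infimum is $\delta_\mu(x)\in\down{k}$; dual Scott openness then yields a clopen $c_x\ni x$ with $\mu(c_x)\leq k$, and replacing $c_x$ by $c_x\wedge b$ I may assume $c_x\subseteq b$.

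Finally I would globalise by compactness: the clopens $\{c_x\mid x\in b\}$ cover $b$, and since $b$ is clopen in the compact space $X$ it is compact, so finitely many $c_{x_1},\dots,c_{x_n}$ already cover $b$, giving $b=c_{x_1}\vee\cdots\vee c_{x_n}$. Finite subadditivity of $\mu$ (from $\mu(a\vee b)+\mu(a\wedge b)=\mu(a)+\mu(b)$ together with $x\leq x+y$) then gives $\mu(b)\leq\mu(c_{x_1})+\cdots+\mu(c_{x_n})$, and since $S$ is idempotent the right-hand side equals $\mu(c_{x_1})\vee\cdots\vee\mu(c_{x_n})\leq k$. Hence $\mu(b)\leq k$, as required. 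The hard part is exactly this reverse inequality: one must extract at each point a clopen on which $\mu$ is bounded by $k$ — this is where co-compactness of $k$ and the dual Scott topology are indispensable — and then patch these together by compactness, the idempotency of $S$ being what collapses the finite-union sum into a join still bounded by $k$.
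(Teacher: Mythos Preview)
Your proof is correct and shares the same core compactness argument with the paper: at each point $x\in b$ one finds a clopen $c_x\ni x$ with $\mu(c_x)\leq k$, extracts a finite subcover, and uses idempotency to bound $\mu(b)$. The only difference is packaging: the paper phrases the computation as showing that $\mu(b)$ is the limit of the directed net $\{\bigvee_{x\in F}\delta_\mu(x)\mid F\in\Pfin(b)\}$ (and then invokes Hausdorffness of $S$), proving by contradiction that for each $l_i$ with $\mu(b)\not\leq l_i$ there is some $x_i\in b$ with $\delta_\mu(x_i)\not\leq l_i$ --- the contrapositive of exactly your key step. Your framing via two inequalities and the infimum of co-compact upper bounds is slightly more direct and makes the order-theoretic content explicit, while the paper's net-convergence formulation foregrounds the topological structure; neither buys anything the other does not.
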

\begin{proof}
Fix $b\in B$. We show that $\mu(b)$ is the limit in $S$ of the directed set
\begin{align*}
N=\big\{\bigvee_{x\in F}{\delta_{\mu}(x)}\mid F\in \Pfin(b)\big\},
\end{align*}
considered as a net. Since $\int_{b}{\delta_{\mu}}$ is also a limit for this net, it will follow that $\mu(b)=\int_{b}{\delta_{\mu}}$ because $S$ is Hausdorff. Let $k,l_1,\ldots,l_n$ be co-compact elements of $S$ such that the basic open set
\[
U= \down{k}\cap (\down{l_1})^c\cap\cdots\cap(\down{l_n})^c.
\]
contains $\mu(b)$. We prove that the net $N$ is eventually in the open neighbourhood $U$ of $\mu(b)$. Note that, for each $x\in b$, $\delta_{\mu}(x)$ is below $\mu(b)$ whence it belongs to $\down{k}$. So it suffices to find, for every $i\in\{1,\ldots,n\}$, a point $x_i\in b$ such that $\delta_{\mu}(x_i)\in (\down{l_i})^c$, for then every element of $N$ above $\bigvee_{i=1}^n{\delta_{\mu}(x_i)}$ will belong to $U$. Assume by contradiction that there exists $i\in\{1,\ldots,n\}$ with
\[
b\cap \delta_{\mu}^{-1}((\down{l_i})^c)=\emptyset.
\]
That is, $b\subseteq \delta_{\mu}^{-1}(\down{l_i})$. Since $\down{l_i}$ is clopen, for each $x\in b$ there is an open neighbourhood $U_x$ of $\delta_{\mu}(x)$ contained in $\down{l_i}$. By definition, $\delta_{\mu}(x)$ is the limit of the net $\{\mu(b)\mid x\in b\}$, so for every $x\in b$ there is $b_x\in B$ such that $x\in b_x$ and $\mu(b_x)\in U_x$. We can assume without loss of generality that each $b_x$ is contained in $b$. Then the clopen covering $\{b_x\mid x\in b\}$ of $b$ has a finite subcover $\{b_{x_1},\ldots, b_{x_p}\}$. For every $j\in\{1,\ldots,p\}$ we have $\mu(b_{x_j})\in U_x\subseteq \down{l_i}$, thus
\[
\mu(b)=\mu(b_{x_1})\vee\cdots\vee\mu(b_{x_p})\leq l_i,
\]
a contradiction.
\end{proof}
\begin{theorem}\label{t:continuous-function-representation-profinite-idempotent}
Let $S$ be a profinite idempotent semiring, and $X$ a Boolean space. Then the continuous homomorphisms of $S$-semimodules \[\delta\colon \M(X,S)\rightleftarrows \C(X,\d{S})\cocolon \int\] of \eqref{eq:density-g1}--\eqref{eq:int-g2} are inverse to each other. Thus $\M(X,S)$, the algebra of all the $S$-valued measures on $X$, is isomorphic to the algebra $\C(X,\d{S})$ of all the continuous $\d{S}$-valued functions on $X$.
\end{theorem}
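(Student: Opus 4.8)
The plan is to verify that the two continuous homomorphisms are mutually inverse, by checking separately that $\int\circ\delta=\mathrm{id}_{\M(X,S)}$ and $\delta\circ\int=\mathrm{id}_{\C(X,\d{S})}$; since the maps are already known to be continuous $S$-semimodule homomorphisms, this immediately yields the claimed isomorphism of topological algebras. One of the two composites is already in hand: Lemma \ref{l:density-functions} asserts that $(\int\delta_\mu)(b)=\int_b\delta_\mu=\mu(b)$ for every $\mu\in\M(X,S)$ and every clopen $b$, which is precisely the statement $\int\circ\delta=\mathrm{id}_{\M(X,S)}$. It therefore remains only to prove that $\delta_{\int f}=f$ for every continuous $f\colon X\to\d{S}$.

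Unwinding the definitions, for each $x\in X$ one has $\delta_{\int f}(x)=\bigwedge_{x\in b}\bigvee_{y\in b}f(y)$, the infimum ranging over clopens $b$ containing $x$. The inequality $f(x)\leq\delta_{\int f}(x)$ is immediate, since $x\in b$ gives $f(x)\leq\bigvee_{y\in b}f(y)$ for every such $b$; equivalently, this is the unit of the adjunction of Proposition \ref{p:Galois-connection}. The content of the theorem lies entirely in the reverse inequality $\delta_{\int f}(x)\leq f(x)$.

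To establish it I would exploit that, $S$ being a topological meet-semilattice on a Boolean space, every element of $S$ is the infimum of the co-compact elements lying above it. Hence it suffices to show, for each co-compact $l$ with $f(x)\leq l$, that $\delta_{\int f}(x)\leq l$. Fixing such an $l$, the down-set $\down{l}$ is clopen and, being downward closed, is open in the dual Scott topology; as $f\colon X\to\d{S}$ is continuous by hypothesis, $f^{-1}(\down{l})$ is an open neighbourhood of $x$. Since $X$ is Boolean there is a clopen $b$ with $x\in b\subseteq f^{-1}(\down{l})$, so $f(y)\leq l$ for all $y\in b$ and thus $\bigvee_{y\in b}f(y)\leq l$. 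Substituting this particular $b$ into the infimum gives $\delta_{\int f}(x)\leq\bigvee_{y\in b}f(y)\leq l$. Letting $l$ range over the co-compact elements above $f(x)$ then yields $\delta_{\int f}(x)\leq f(x)$, completing the proof that $\delta\circ\int=\mathrm{id}$.

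I expect the reverse inequality to be the only delicate step, and within it the crucial observation is that the clopen down-sets $\down{l}$ for $l$ co-compact are open in the dual Scott topology: this is exactly what allows the continuity of $f$ to be brought to bear on the order-theoretic description of $f(x)$ as an infimum of co-compacts. Everything else is routine, and the continuity and homomorphism properties required for the final \emph{topological} isomorphism have already been recorded in the discussion preceding the theorem.
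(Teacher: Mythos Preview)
Your proposal is correct and follows essentially the same approach as the paper: both reduce $\delta\circ\int=\mathrm{id}$ to finding, for each co-compact $l\geq f(x)$, a clopen $b\ni x$ with $\int_b f\leq l$, and both obtain such a $b$ from the dual Scott continuity of $f$ applied to the open down-set $\down{l}$. The only cosmetic difference is that the paper phrases the argument in terms of net convergence to $f(x)$ in the dual Lawson topology, whereas you argue directly with the order-theoretic description of $f(x)$ as an infimum of co-compacts; the content is identical.
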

\begin{proof}
In view of Lemma \ref{l:density-functions} we know that $\int\circ \, \delta$ is the identity of $\M(X,S)$, for every Boolean space $X$. 
It remains to prove that, whenever $f\colon X\to\d{S}$ is a continuous function, the measure $\mu=\int{f}$ satisfies $f=\delta_{\mu}$. That is, for each $x\in X$,
\begin{equation*}
f(x)=\bigwedge{\big\{\int_b{f}\mid x\in b, \ b\in B\big\}},
\end{equation*}
where $B$ is the dual algebra of $X$. Regarding the codirected set 
\[
N=\big\{\int_b{f}\mid x\in b, \ b\in B\big\}
\] 
as a net, this is equivalent to saying that the limit of $N$ is $f(x)$. Consider co-compact elements $k,l_1,\ldots,l_n$ of $S$ such that the basic open set
\[
U= \down{k}\cap (\down{l_1})^c\cap\cdots\cap(\down{l_n})^c
\]
contains $f(x)$. We must prove that $N$ is eventually in $U$. Of course we have $\int_b{f}\in (\down{l_1})^c\cap\cdots\cap(\down{l_n})^c$ for every $b$ containing $x$. So it suffices to find a clopen $b'\in B$ such that $x\in b'$ and $\int_{b'}{f}\leq k$, for then every element of $N$ below ${\int_{b'}{f}}$ will belong to $U$. Since the function $f$ is continuous with respect to the dual Scott topology of $S$, and $\down{k}$ is dual Scott open, $f^{-1}(\down{k})$ is an open neighbourhood of $x$. Let $b'\in B$ be a clopen satisfying $x\in b'\subseteq f^{-1}(\down{k})$. Then
\[
\int_{b'}{f}\leq k,
\]
as was to be proved.
\end{proof}
Note that, if the semiring $S$ is finite, the dual Scott topology on $S$ is simply the down-set topology, i.e.\ the Alexandroff topology of the order-dual of $S$. In this situation, the previous theorem has the following immediate corollary.
\begin{theorem}\label{t:continuous-function-representation-1}
Let $S$ be a finite idempotent semiring, and $X$ a Boolean space. Then $\p{\S}{X}$, the free profinite $S$-semimodule on $X$, is isomorphic to the algebra $\C(X,\d{S})$ of all the continuous $\d{S}$-valued functions on $X$.
\end{theorem}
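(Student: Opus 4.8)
The plan is to derive the statement directly as the composite of the two representation theorems already established, once we verify that a finite idempotent semiring simultaneously meets the hypotheses of both. First I would record that, as recalled at the beginning of Section~\ref{s:profinite-idempotent}, every finite semiring equipped with the discrete topology is trivially profinite. Hence our $S$, being finite and idempotent, is in particular a \emph{profinite} idempotent semiring, so that Theorem~\ref{t:continuous-function-representation-profinite-idempotent} applies and yields an isomorphism of topological $S$-semimodules $\M(X,S)\cong\C(X,\d{S})$, realised concretely by the mutually inverse maps $\delta$ and $\int$ of \eqref{eq:density-g1}--\eqref{eq:int-g2}. Independently, since $S$ is finite, Theorem~\ref{t:main-S-finite} provides a continuous isomorphism of $S$-semimodules $\p{\S}{X}\cong\M(X,S)$.

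Composing these two isomorphisms then produces a continuous isomorphism of $S$-semimodules $\p{\S}{X}\cong\C(X,\d{S})$, which is exactly the assertion. Because each factor is at once a homeomorphism and a homomorphism of $S$-semimodules, so is the composite, and no further structural verification is needed. In this sense there is no genuine obstacle: all of the substance is already carried by Theorems~\ref{t:main-S-finite} and~\ref{t:continuous-function-representation-profinite-idempotent}, and the corollary follows immediately by chaining them.

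The only point worth a remark — rather than a difficulty — concerns the nature of the space $\d{S}$ in the finite case. For a finite poset every directed subset has a maximum, so the characteristic inaccessibility condition defining Scott opens is automatic; consequently the dual Scott topology on $S$ is simply the down-set topology, i.e.\ the Alexandroff topology of the order-dual of $S$. This makes $\d{S}$ entirely concrete and confirms that $\C(X,\d{S})$ is precisely the $S$-semimodule of those $S$-valued functions on $X$ whose preimages of down-closed subsets of $S$ are open, with its algebraic structure computed pointwise.
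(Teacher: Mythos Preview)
Your proposal is correct and matches the paper's own proof essentially verbatim: the paper simply states that the result follows from Theorems~\ref{t:main-S-finite} and~\ref{t:continuous-function-representation-profinite-idempotent}, and your additional remark about the dual Scott topology reducing to the down-set topology in the finite case is exactly the observation the paper makes immediately before stating the theorem.
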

\begin{proof}
This follows from Theorems \ref{t:main-S-finite} and \ref{t:continuous-function-representation-profinite-idempotent}.
\end{proof}
\begin{remark} 
If $S$ is the two-element distributive lattice $\2$, then $\d{S}$ is homeomorphic to the 
Sierpi{\' n}ski space. We thus recover from Theorem \ref{t:continuous-function-representation-1} the classical representation of the Vietoris space $\V X$ of a Boolean space $X$ as the semilattice of all the continuous functions from $X$ into the Sierpi{\' n}ski space. 
\end{remark}

\section*{Acknowledgements}
I would like to thank my Ph.D.\ advisor Mai Gehrke, and Daniela Petri\c{s}an, for many helpful discussions on the topic of this paper.
The research reported here has been made possible by financial support from Sorbonne Paris Cit\'e (PhD agreement USPC IDEX -- REGGI15RDXMTSPC1GEHRKE), and from the European Research Council (ERC) under the European Union's Horizon 2020 research and innovation programme (grant agreement No.670624). Finally, I am grateful to the anonymous referee for their useful suggestions, which improved the paper.

\end{document}